\newcommand{\calI}{\mathcal{I}}
\newcommand{\calE}{\mathcal{E}}
\newcommand{\calN}{\mathcal{N}}
\newcommand{\calM}{\mathcal{M}}
\newcommand{\ZZ}{\mathbb{Z}}
\newcommand{\RR}{\mathbb{R}}
\newcommand{\kk}{\Bbbk}
\newcommand{\eb}{\mathbf{e}}
\newcommand{\Hom}{\operatorname{Hom}}
\def\opn#1#2{\def#1{\operatorname{#2}}} 
\opn\Cl{Cl} \opn\conv{conv} \opn\deg{deg} \opn\rank{rank} \opn\Spec{Spec} 
\opn\cone{cone} \opn\End{End} \opn\Hom{Hom} \opn\mod{mod} \opn\gldim{gldim} \opn\star{star}
\newtheorem{thm}{Theorem}[section]
\newtheorem{cor}[thm]{Corollary}
\newtheorem{lem}[thm]{Lemma}
\newtheorem{prop}[thm]{Proposition}
\theoremstyle{definition}
\newtheorem{defi}[thm]{Definition}
\newtheorem{ex}[thm]{Example}
\theoremstyle{remark}
\newtheorem{rem}[thm]{Remark}
\begin{document}

\title{Levelness versus almost Gorensteinness of edge rings of complete multipartite graphs}
\author{Akihiro Higashitani}
\author{Koji Matsushita}

\address[A. Higashitani]{Department of Pure and Applied Mathematics, Graduate School of Information Science and Technology, Osaka University, Suita, Osaka 565-0871, Japan}
\email{higashitani@ist.osaka-u.ac.jp}
\address[K. Matsushita]{Department of Pure and Applied Mathematics, Graduate School of Information Science and Technology, Osaka University, Suita, Osaka 565-0871, Japan}
\email{k-matsushita@ist.osaka-u.ac.jp}

\subjclass{
Primary 13H10; 
Secondary 52B20, 
13F65, 
13A02, 
05C25. 
} 
\keywords{level, almost Gorenstein, edge rings, complete multipartite graphs}

\maketitle

\begin{abstract} 
Levelness and almost Gorensteinness are well-studied properties on graded rings as a generalized notion of Gorensteinness. 
In the present paper, we study those properties for the edge rings of the complete multipartite graphs, 
denoted by $\kk[K_{r_1,\ldots,r_n}]$ with $1 \leq r_1 \leq \cdots \leq r_n$. 
We give the complete characterization of which $\kk[K_{r_1,\ldots,r_n}]$ is level in terms of $n$ and $r_1,\ldots,r_n$. 
Similarly, we also give the complete characterization of which $\kk[K_{r_1,\ldots,r_n}]$ is almost Gorenstein in terms of $n$ and $r_1,\ldots,r_n$. 
\end{abstract}

\bigskip

\section{Introduction}

\subsection{Backgrounds}

Cohen--Macaulay (local or graded) rings and Gorenstein (local or graded) rings are definitely the most important properties 
and play the crucial roles in the theory of commutative algebras. 
However, there are quite many examples which are Cohen--Macaulay but not Gorenstein. 
Thus, many researchers working on commutative algebras have been attempting the introduction of good ``intermediate'' classes of those two properties. 
Thanks to those previous studies, many classes of Cohen--Macaulay graded rings which are not Gorenstein have been defined and those theories have been developed. 
In the present paper, we concentrate on two well-studied properties, {\em level} rings and {\em almost Gorenstein} homogeneous rings. 
For the precise definitions of level rings and almost Gorenstein homogeneous rings, see Section~\ref{sec:def}. 
For example, the characterizations of both levelness and almost Gorensteinness of Hibi rings are given by Miyazaki (\cite{M17} and \cite{M18}). 
Those characterizations will be mentioned in Section~\ref{sec:poset} in detail. 

It is clear that level rings and almost Gorenstein rings are Gorenstein, 
but it is hard to determine how different between levelness and Gorensteinness as well as almost Gorensteinness and Gorensteinness. 
Even so, it is a naive problem to analyze the differences of those properties. 
Moreover, it is also natural to think of the difference of levelness and almost Gorensteinness for homogeneous rings 
and compare these properties from points of view of an extension of Gorenstein homogeneous rings. 
For those purpose, we restrict the objects of homogeneous rings. 
The central objects of the present paper are the edge rings of the complete multipartite graphs.

\subsection{Edge polytopes and edge rings}
We recall the definition of edge rings. Note that edge rings are homogeneous rings arising from graphs 
and regarded as toric rings associated to some convex polytopes, which are called edge polytopes. 
See, e.g., \cite[Section 10]{Villa} or \cite[Section 5]{HHO} for the introduction to edge rings. 

For a positive integer $d$, let $[d]:=\{1,\ldots,d\}$. 
Consider a finite simple graph $G$ on the vertex set $V(G)=[d]$ with the edge set $E(G)$. 
Given an edge $e=\{i,j\} \in E(G)$, let $\rho(e):=\eb_i+\eb_j$, where $\eb_i$ denotes the $i$-th unit vector of $\RR^d$ for $i=1,\ldots,d$. 
We define the convex polytope associated to $G$ as follows: 
$$P_G:=\conv(\{\rho(e) : e \in E(G)\}) \subset \RR^d.$$ 
We call $P_G$ the \textit{edge polytope} of $G$. We also define the {\em edge ring} of $G$, denoted by $\kk[G]$, 
as a subalgebra of the polynomial ring $\kk[{\bf t}]=\kk[t_1,\ldots,t_d]$ in $d$ variables over a field $\kk$ as follows: 
$$\kk[G] := \kk[t_it_j : \{i,j\} \in E(G)].$$ 
This is actually a monoid $\kk$-algebra associated to the monoid $\ZZ_{\geq 0}(P_G \cap \ZZ^d)$, 
i.e., the edge ring is the toric ring (a.k.a. the polytopal monomial subring) of the edge polytope. 
Note that $\dim P_G=d-1$ if $G$ is non-bipartite (\cite[Proposition 1.3]{OH98}). 
Thus, we conclude that the Krull dimension of $\kk[G]$, denoted by $\dim\kk[G]$, is equial to $d$ if $G$ is non-bipartite.

\subsection{Edge rings of complete multipartite graphs}

We also recall what complete multipartite graphs are. 
Let $K_{r_1,\ldots,r_n}$ be the graph on the vertex set $\bigsqcup_{k=1}^n V_k$, $|V_k|=r_k$ for $k=1,\ldots,n$ and $1\le r_1\le \cdots \le r_n$, 
with the edge set $\{\{u,v\} : u \in V_i, v \in V_j, 1 \leq i < j \leq n\}.$ 
This graph $K_{r_1,\ldots,r_n}$ is called the {\em complete multipartite graph} with type $(r_1,\ldots,r_n)$. 
We always denote the number of vertices of $K_{r_1,\ldots,r_n}$ by $d$, i.e., $d=\sum_{i=1}^nr_i$. 

The edge polytope and the edge ring of $K_{r_1,\ldots,r_n}$ were investigated in \cite[Section 2]{OH00}. 
For example, the Ehrhart polynomial of $P_{K_{r_1,\ldots,r_n}}$ (i.e., the Hilbert function of $\kk[K_{r_1,\ldots,r_n}]$) 
is completely determined in \cite[Theorem 2.6]{OH00}. 
Moreover, the Gorensteinness of $\kk[K_{r_1,\ldots,r_n}]$ is proved as follows: 
\begin{prop}[Characterization of Gorensteinness, {\cite[Remark 2.8]{OH00}}]
Let $1 \leq r_1 \leq \cdots \leq r_n$ and let $d=\sum_{i=1}^n r_i$, where $n \geq 2$. 
Then the edge ring of the complete multipartite graph $K_{r_1,\ldots,r_n}$ is Gorenstein if and only if 
\begin{itemize}
\item $n=2$ and $(r_1,r_2) \in \{(1,m), (m,m) : m \geq 1\}$; 
\item $n=3$ and $1 \leq r_1 \leq r_2 \leq r_3 \leq 2$; 
\item $n=4$ and $r_1=\cdots=r_4=1$. 
\end{itemize}
\end{prop}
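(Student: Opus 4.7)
The plan is to invoke the standard toric criterion for Gorensteinness. The complete multipartite graph $K_{r_1,\ldots,r_n}$ satisfies Ohsugi--Hibi's odd cycle condition when $n\ge 3$, and in the bipartite case $n=2$ the ring $\kk[K_{r_1,r_2}]$ coincides with a Segre product of two polynomial rings; in either case $\kk[K_{r_1,\ldots,r_n}]$ is a normal Cohen--Macaulay graded domain. Writing
\[
C \;=\; \cone\bigl(\{(\rho(e),1) : e \in E(K_{r_1,\ldots,r_n})\}\bigr) \;\subset\; \RR^{d+1}
\]
with facet-defining inequalities $\langle a_F, x\rangle \ge 0$, Stanley's criterion tells us that the ring is Gorenstein if and only if there exists a lattice point $v$ in the relative interior of $C$ with $\langle a_F, v\rangle = 1$ for every facet $F$. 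Equivalently, the $h^\ast$-vector of $P_{K_{r_1,\ldots,r_n}}$ must be palindromic, and one may read this $h^\ast$-polynomial directly off the Ehrhart polynomial computed in \cite[Theorem~2.6]{OH00}.

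First I would dispose of the bipartite case $n=2$: the identification $\kk[K_{r_1,r_2}]\cong \kk[x_1,\ldots,x_{r_1}]\,\#\,\kk[y_1,\ldots,y_{r_2}]$ realises the edge ring as a Segre product, whose Gorensteinness is classical and holds precisely when one factor has a single variable or both factors have the same rank. This recovers exactly $(r_1,r_2)\in\{(1,m),(m,m):m\ge 1\}$.

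For $n\ge 3$ the main task is to enumerate the facets of $P_{K_{r_1,\ldots,r_n}}$ and then solve the height-one system $\langle a_F, v\rangle=1$ in integers. Alongside the coordinate facets $x_i=0$ there are non-coordinate facets indexed by certain bipartitions of the color classes $V_1,\ldots,V_n$, which can be written down explicitly in the Ohsugi--Hibi framework for graphs satisfying the odd cycle condition. Imposing the height-one condition on each of these facets gives a small linear system whose solvability and integrality pin down sharp divisibility constraints on the tuple $(r_1,\ldots,r_n)$. A case-by-case check should force these constraints to be satisfiable only in the listed families, namely $n=3$ with $r_3\le 2$ and $n=4$ with $r_1=\cdots=r_4=1$. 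In every other case I would produce a pair of facets whose height-one equations force some coordinate of the would-be $v$ to be non-integral, equivalently exhibiting a pair of $h^\ast$-coefficients that disagree.

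The hardest part I anticipate is the facet enumeration and the ensuing bookkeeping: while the coordinate facets are immediate, the non-coordinate facets for $n\ge 3$ form an interlocking system of linear conditions on $v$, and tracking when these are jointly integer-solvable is the technical core of the argument. The explicit Ehrhart formula of \cite[Theorem~2.6]{OH00} provides a convenient cross-check, making the failure of palindromy concrete in each of the remaining infinite families and confirming that no Gorenstein cases are missed outside the three listed above.
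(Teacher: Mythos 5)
The paper does not actually prove this proposition: it is imported verbatim from \cite[Remark 2.8]{OH00} and, as the authors note, follows from De Negri--Hibi's classification of Gorenstein algebras of Veronese type \cite{DH}. So there is no in-paper argument to match, and your plan --- Stanley's criterion for normal toric rings, i.e.\ the existence of an interior lattice point sitting at height one over every facet, together with the Segre-product identification for $n=2$ --- is a legitimate independent route, and the bipartite case is handled correctly.

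However, as written the proposal has a genuine gap: the entire ``only if'' direction for $n\ge 3$ is compressed into ``a case-by-case check should force these constraints to be satisfiable only in the listed families,'' which is exactly the content of the proposition; nothing is verified. Two technical points make this non-automatic. First, the supporting hyperplanes are $x_i=0$ and $\langle x,f_k\rangle=0$ with $f_k=\sum_{i\notin V_k}\eb_i-\sum_{j\in V_k}\eb_j$, but the correspondence between these and the facets of $P_{K_{r_1,\ldots,r_n}}$ fails to be injective (some hyperplanes do not cut facets) precisely in the degenerate cases $n=2$ and $n=3$ with $r_1=1$ --- which is where the infinite Gorenstein families live. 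If you impose height one with respect to every element of $\Psi_r\cup\Psi_f$ you will wrongly reject, e.g., $K_{1,1,2}$: its minimal interior lattice point is $(2,2,1,1)$, which has $x_1$-height $2$, but $x_1\ge 0$ is not a facet of $P_{K_{1,1,2}}$. Second, for non-bipartite $G$ the lattice generated by the $\rho(e)$ is the index-two sublattice of $\ZZ^d$ of even coordinate sum, so the primitive normal attached to $f_k$ is $f_k/2$ and the height-one condition reads $\langle v,f_k\rangle=2$, not $1$; normalizing incorrectly changes which tuples survive. With the facet list and the lattice normalization handled correctly the computation does close up and reproduces the three listed families, but that bookkeeping is the proof, and it is absent here.
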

This proposition is a direct consequence of \cite{DH}. 

In \cite{HM}, the authors of the present paper investigated $\kk[K_{r_1,\ldots,r_n}]$ from different points of view. 
For example, it is proved that the class group of $\kk[K_{r_1,\ldots,r_n}]$ is isomorphic to $\ZZ^n$ if $n =3$ with $r_1 \geq 2$ or $n \geq 4$. 
Moreover, the authors also discuss its conic divisorial ideals and construct non-commutative crepant resolutions 
for Gorenstein edge rings of $K_{r_1,\ldots,r_n}$.

\subsection{Main Results}

The goal of the present paper is to determine when $\kk[K_{r_1,\ldots,r_n}]$ is level or almost Gorenstein, where $1 \leq r_1 \leq \cdots \leq r_n$. 
In the case where $n=2$ or $n=3$ with $r_1=1$, Proposition~\ref{prop:order} says that the edge ring $\kk[K_{r_1,\ldots,r_n}]$ is isomorphic to a certain Hibi ring. 
Thus, the characterizations can be obtained from the results on Hibi rings. See Section~\ref{sec:poset}. 
Hence, our main concern is in the case where $n=3$ with $r_1 \geq 2$ or $n \geq 4$.

The first main result is the characterization of the levelness of $\kk[K_{r_1,\ldots,r_n}]$: 
\begin{thm}[Characterization of levelness]\label{thm:level}
Let $1 \leq r_1 \leq \cdots \leq r_n$ and let $d=\sum_{i=1}^n r_i$, where $n \geq 2$. 
Then the edge ring of the complete multipartite graph $K_{r_1,\ldots,r_n}$ is level if and only if $n$ and $(r_1,\ldots,r_n)$ satisfy one of the following: 
\begin{enumerate}
\item[{\em (i)}] $n=2$; 
\item[{\em (ii)}] $n=3$ and $(r_1,r_2,r_3) \in \{(1,1,m) : m \geq 1\} \cup \{(1,2,m) : m \geq 2\}$; 
\item[{\em (iii)}] $n=3$ and $(r_1,r_2,r_3) \in \{(2,2,m) : m \geq 2\} \cup \{(3,3,3)\}$; 
\item[{\em (iv)}] $n=4$ and $(r_1,r_2,r_3,r_4) \in \{(1,1,1,m) : m \geq 1\}$; 
\item[{\em (v)}] $n=5$ and $r_1=\cdots=r_5=1$. 
\end{enumerate}
\end{thm}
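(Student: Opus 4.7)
The plan is to analyze the canonical module of $\kk[K_{r_1,\ldots,r_n}]$ via the cone description of its associated toric ring. Set $C := \RR_{\geq 0}\bigl(P_{K_{r_1,\ldots,r_n}} \times \{1\}\bigr) \subset \RR^{d+1}$. Since $\kk[K_{r_1,\ldots,r_n}]$ is graded by assigning degree $1$ to each generator $t_i t_j$, the canonical module is the $\kk$-span of the lattice points in the relative interior of $C$, and $\kk[K_{r_1,\ldots,r_n}]$ is level precisely when every minimal module generator of this canonical module has the same last coordinate (``height''). Equivalently, writing $k_{\min}$ for the smallest height at which an interior lattice point occurs, levelness means that every interior lattice point at height $> k_{\min}$ decomposes as the sum of an interior lattice point at strictly smaller height and a lattice point of $P_{K_{r_1,\ldots,r_n}} \times \{1\}$.

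Cases (i) and (ii) will be handled by invoking Proposition~\ref{prop:order} referenced above, which identifies $\kk[K_{r_1,\ldots,r_n}]$ with a Hibi ring when $n = 2$ or when $n = 3$ with $r_1 = 1$, followed by Miyazaki's combinatorial criterion for Hibi-ring levelness from \cite{M17}. For the remaining range $n=3$ with $r_1 \geq 2$ and $n \geq 4$, I would write down the facet inequalities of $P_{K_{r_1,\ldots,r_n}}$ explicitly: an integer vector $(\ab, k)$ lies in the interior of $C$ iff $a_v \geq 1$ for every vertex $v$, $k - \sum_{v \in V_j} a_v \geq 1$ for every part $V_j$ with $r_j \geq 2$, and $\sum_v a_v = 2k$. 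Armed with this concrete description, I would verify each positive case (iii)--(v) by enumerating the minimal interior generators and checking that their heights agree: the infinite families $(2,2,m)$ and $(1,1,1,m)$ admit a one-parameter set of minimal generators whose common height is controlled by the largest part, while $(3,3,3)$ and $(1,1,1,1,1)$ reduce to a finite check.

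The remaining task, and the main obstacle, is to rule out levelness in every case \emph{not} appearing in (i)--(v). For each excluded family, I would produce two minimal generators of the canonical module sitting at different heights. The strategy is to exhibit a ``baseline'' generator $\alpha$ at the minimum possible height (typically a suitable all-ones-type vector) together with a second interior point $\beta$ at strictly larger height whose translate $\beta - (\rho(e), 1)$ fails strict interiority for every edge $e$ of $K_{r_1,\ldots,r_n}$, thereby forcing $\beta$ to be an additional minimal generator. The delicate step is to construct such $\beta$ uniformly across each infinite family---for instance as $m$ varies in $(3,3,m)$ with $m \geq 4$, or in the bipartite-style families $(2,3,m)$, $(1,1,2,m)$, and as $n$ grows in $(1,\ldots,1)$---and to check through the facet inequalities above that subtracting any edge direction violates strict positivity in some facet. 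This boundary-adjacency bookkeeping, becoming tight precisely at the critical parameter values in the statement, is where the combinatorial core of the theorem lies.
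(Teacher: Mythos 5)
Your overall framework is the same as the paper's: interpret the canonical module as the span of interior lattice points of the cone over $P_{K_{r_1,\ldots,r_n}}$, reduce cases (i)--(ii) to Miyazaki's criterion for Hibi rings via Proposition~\ref{prop:order}, and for the rest decide levelness by asking whether every interior lattice point above the minimal height decomposes as a lower interior point plus an edge vector. However, there is a genuine gap in how you propose to rule out the non-level cases. You plan to exhibit, for \emph{every} excluded family, an explicit second minimal generator at a higher height. The paper avoids this for a large portion of the parameter space by first splitting into three regimes --- (A) $2r_n<d$ with $d$ even, (B) $2r_n<d$ with $d$ odd, (C) $2r_n\ge d$ --- and showing via Tutte's theorem that in case (A) the graph has a perfect matching, so the all-ones vector is the \emph{unique} interior lattice point at the minimal height and $h_s=1$; then level would force Gorenstein (Remark~\ref{rem:level_Gor}), and the Gorenstein cases are already classified. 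Without this observation you are left to hand-construct a non-decomposable higher-degree generator for every even-$d$ family with $2r_n<d$ (e.g.\ $K_{3,3,4}$, $K_{2,2,2,2}$, $K_{1,1,2,2,2}$, $K_{2n}$, \dots), a task your sketch does not address and for which no uniform candidate is proposed; your explicit witness strategy is really only carried out in the paper for cases (B) and (C), where a single point of the form $c\,\eb_{v_1}+\sum_{i\ne v_1}\eb_i$ at height $\ell+1$ does the job.

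Two smaller issues. First, your facet description is incorrect in the relevant range: you impose $k-\sum_{v\in V_j}a_v\ge 1$ only for parts with $r_j\ge 2$, but for $n=3$ with $r_1\ge 2$ or $n\ge 4$ the paper notes that \emph{all} of the linear forms $f_j$, including those attached to singleton parts, define facets, and these inequalities are not implied by the coordinate inequalities at large heights; omitting them would corrupt both your enumeration of minimal generators and your non-decomposability checks. Second, for the positive cases your claim that the families $(2,2,m)$ and $(1,1,1,m)$ have minimal generators all at one height is the right statement but is asserted rather than argued; the paper proves it for $K_{2,2,m}$ ($m\ge4$) and $K_{3,3,3}$ by an induction showing every interior point at height $\ell+k$ loses an edge vector while staying interior, handles $K_{1,1,1,m}$ ($m\ge3$) by noting the socle degree is $2$ and invoking Theorem~\ref{thm:Y}, and settles the sporadic cases $K_{2,2,3}$, $K_{1,1,1,2}$, $K_{1,1,1,1,1}$, $K_{1,2,2,2}$ by computer. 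Your plan would need to supply arguments of comparable substance at each of these points.
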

Note that the first two cases come from the results on Hibi rings. See Proposition~\ref{prop:level}. 

The second main result is the characterization of the almost Gorensteinness of $\kk[K_{r_1,\ldots,r_n}]$: 
\begin{thm}[Characterization of almost Gorensteinness]\label{thm:almGor}
Let $1 \leq r_1 \leq \cdots \leq r_n$ and let $d=\sum_{i=1}^n r_i$, where $n \geq 2$. 
Then the edge ring of the complete multipartite graph $K_{r_1,\ldots,r_n}$ is almost Gorenstein 
if and only if $n$ and $(r_1,\ldots,r_n)$ satisfy one of the following: 
\begin{enumerate}
\item[{\em (i)}] $n=2$ and $(r_1,r_2) \in \{(1,m),(m,m) : m \geq 1\} \cup \{(2,m) : m \geq 2\}$; 
\item[{\em (ii)}] $n=3$ and $(r_1,r_2,r_3) \in \{(1,1,m), (1,m,m) : m \geq 1\}$; 
\item[{\em (iii)}] $n=3$ and $(r_1,r_2,r_3)=(2,2,2)$; 
\item[{\em (iv)}] $n=4$ and $(r_1,r_2,r_3,r_4) \in \{(1,1,m,m) : m \geq 1\}$; 
\item[{\em (v)}] $n \geq 4$ and $(r_1,\ldots,r_{n-1},r_n)=(1,\ldots,1,n-3)$. 
\item[{\em (vi)}] $n$ is even with $n \geq 6$ and $r_1=\cdots=r_n=1$; 
\end{enumerate}
\end{thm}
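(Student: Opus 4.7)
\medskip
\noindent\textbf{Proof proposal.}
Items (i) and (ii), which cover $n=2$ and $n=3$ with $r_1=1$, will be handled via Proposition~\ref{prop:order}: there $\kk[K_{r_1,\ldots,r_n}]$ is isomorphic to a Hibi ring, and Miyazaki's characterization of almost Gorenstein Hibi rings \cite{M18} applies. What remains in these cases is the combinatorial translation of his poset-theoretic criterion into the stated conditions on the $r_i$, which I expect to be routine, parallel to the corresponding step for levelness in Theorem~\ref{thm:level}.

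The substantive content is the range $n=3$ with $r_1\ge 2$ or $n\ge 4$. Set $R:=\kk[K_{r_1,\ldots,r_n}]$ and let $\omega_R$ be its graded canonical module. Since $R$ is a standard graded normal Cohen--Macaulay toric ring, $\omega_R$ is generated as an $R$-module by the monomials $t^{\alpha}$ with $\alpha$ in the relative interior of $\calC:=\cone(P_{K_{r_1,\ldots,r_n}})$; the facet description of $\calC$ is explicit from \cite[Section~2]{OH00} and \cite{HM}, and the degree of $t^{\alpha}$ is $\tfrac12\sum_i\alpha_i$. I plan to use the defining criterion: $R$ is almost Gorenstein iff there is a graded exact sequence $0\to R\to\omega_R(-a)\to C\to 0$ with $C$ an Ulrich $R$-module (equivalently, $\mathfrak m C$ is generated by a linear system of parameters), where $a=a(R)$. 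Such a homomorphism $R\to\omega_R(-a)$ is determined by a choice of nonzero $\delta\in(\omega_R)_{-a}$, i.e., by an interior lattice point of $\calC$ of minimal degree, so the question reduces to the combinatorics of minimal-degree interior lattice points of $\calC$ together with the structure of the cokernel of multiplication by $\delta$.

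For the sufficiency of (iii)--(vi), in each of the families $(2,2,2)$, $(1,1,m,m)$, $(1,\ldots,1,n-3)$ for $n\ge 4$, and $(1,\ldots,1)$ for even $n\ge 6$, I would exhibit a distinguished interior lattice point $\delta$ of minimal degree together with a pairing -- typically induced by a graph symmetry swapping two vertex parts -- of the remaining minimal interior lattice points, and use it to verify that the cokernel of multiplication by $\delta$ is annihilated modulo an explicit linear system of parameters drawn from the vertex monomials $t_it_j$. The high symmetry of each family should make this verification case-by-case but tractable.

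The main obstacle is the necessity direction: ruling out every $(r_1,\ldots,r_n)$ outside (i)--(vi). I plan to combine two complementary obstructions. First, a numerical obstruction from the $h^*$-vector of $P_{K_{r_1,\ldots,r_n}}$, computable from the Ehrhart polynomial in \cite[Theorem~2.6]{OH00}: almost Gorensteinness forces strong symmetry constraints on $h^*$ that I expect to fail as soon as $(r_1,\ldots,r_n)$ leaves the listed families. When the numerical test is inconclusive, I would argue directly that for any candidate $\delta$ the cokernel carries a nonzero socle element in some internal degree, breaking the Ulrich condition. Uniformizing these arguments over the remaining infinite families (notably $n=3$ with $2\le r_1\le r_2\le r_3$ outside $(2,2,2)$, and $n\ge 4$ with $r_n\ne n-3$) is the delicate technical point; the comparison with Theorem~\ref{thm:level}, whose list is strictly larger, shows that the obstructions needed here must be genuinely finer than those sufficient to rule out levelness.
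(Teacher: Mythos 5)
Your reduction of (i) and (ii) to Miyazaki's characterization of almost Gorenstein Hibi rings via Proposition~\ref{prop:order} matches the paper, and your framing of the remaining cases through the exact sequence $0\to R\to\omega_R(-a)\to C\to 0$ with the Ulrich condition on $C$ is the right starting point. But the core of your plan has a genuine gap. Your necessity argument leans on ``strong symmetry constraints on $h^*$,'' and the only such constraint actually available is Theorem~\ref{thm:h_s=1} ($h_s=1$ when $s\ge 2$). That does eliminate the cases where $d$ is odd or $2r_n\ge d$ (the paper's cases (B) and (C), where one shows $h_s\ge 2$ and $s\ge 2$ by exhibiting two distinct minimal interior lattice points), but it says nothing in the remaining case ($d$ even, $2r_n<d$), where $h_s=1$ always holds. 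Within that case the $h$-vector determines $e(C)$ (via the Hilbert functions of $R$ and $\omega_R$) but not $\mu(C)$, so no purely numerical test on $h^*$ can separate, say, $K_{1,1,m,m}$ from $K_{1,2,2,3}$. Your fallback --- producing a socle element of $C$ for every candidate $\delta$ --- is not developed, and is also more work than needed: by \cite[Proposition 2.3]{H}, $\mu(C)=r(R)-1$ is independent of the chosen embedding $R\hookrightarrow\omega_R(-a)$, so there is no quantification over $\delta$ to worry about.

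What the paper actually does, and what your proposal is missing, is an exact computation of both sides of the Ulrich inequality. It extracts $e(C)=\sum_k\bigl(\tfrac{d}{2}-r_k-1\bigr)\binom{d-2}{r_k-1}$ from the two top coefficients of the Ehrhart polynomial of \cite[Theorem 2.6]{OH00}, and computes $\mu(C)=r(R)-1$ by classifying the minimal (``first appearing'') interior lattice points of the dilates of $P_{K_{r_1,\ldots,r_n}}$: these are exactly the points with all coordinates $1$ outside a single part $V_k$, giving $\mu(C)=\sum_k\sum_{j}\binom{r_k-1+2j}{r_k-1}$. A termwise comparison then shows $e_k(C)=\mu_k(C)$ precisely when $r_k\in\{1,d/2-1\}$ and $e_k(C)>\mu_k(C)$ otherwise, which settles sufficiency and necessity simultaneously and, after the elementary reformulation of Lemma~\ref{lem:rephrase}, yields the list (iii)--(vi). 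Your plan splits sufficiency (explicit linear systems of parameters, graph symmetries) from necessity (socle elements), and neither half is carried far enough to be checkable; without something playing the role of the interior-point count for $\mu(C)$, I do not see how either direction closes.
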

Note that the first two cases come from the result on Hibi rings. See Section~\ref{sec:poset} (below Proposition~\ref{prop:level}).

As an immediate corollary of those theorems, we obtain the following: 
\begin{cor}
The edge ring of the complete multipartite graph $K_{r_1,\ldots,r_n}$ is level and almost Gorenstein but not Gorenstein if and only if one of the following holds: 
\begin{itemize}
\item $n=2$ and $(r_1,r_2) \in \{(2,m) : m \geq 3\}$; 
\item $n=3$ and $(r_1,r_2,r_3) \in \{(1,1,m) : m \geq 3\}$. 
\end{itemize}
In particular, in both cases, the edge rings are isomorphic to certain Hibi rings. 
\end{cor}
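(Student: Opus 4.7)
The corollary is essentially a bookkeeping consequence of the three characterizations already at hand: Theorem~\ref{thm:level} for levelness, Theorem~\ref{thm:almGor} for almost Gorensteinness, and the proposition of \cite{OH00} for Gorensteinness. My plan is to form the intersection of the level list and the almost Gorenstein list, and then subtract the tuples that appear in the Gorenstein list.

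I would organize the case analysis by the value of $n$. For $n=2$, every pair $(r_1,r_2)$ is level, while $(r_1,r_2)$ is almost Gorenstein precisely for the pairs $(1,m)$, $(m,m)$, and $(2,m)$ with $m\geq 2$; after deleting the Gorenstein pairs $(1,m)$ and $(m,m)$ (which includes $(2,2)$), exactly the pairs $(2,m)$ with $m\geq 3$ survive. For $n=3$, the level tuples with $r_1\geq 2$ are $(2,2,m)$ with $m\geq 2$ and $(3,3,3)$, none of which are almost Gorenstein once one removes the Gorenstein case $(2,2,2)$; the level tuples $(1,1,m)$ with $m\geq 1$ are all almost Gorenstein, leaving $(1,1,m)$ with $m\geq 3$ after deleting the Gorenstein tuples $(1,1,1)$ and $(1,1,2)$; finally, among $(1,2,m)$ with $m\geq 2$ the only almost Gorenstein one is $(1,2,2)$, which is Gorenstein. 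For $n=4$, the level tuples are $(1,1,1,m)$ with $m\geq 1$, and the only one that is also almost Gorenstein is $(1,1,1,1)$, which is Gorenstein. For $n=5$, the only level tuple is $(1,1,1,1,1)$, and this is not almost Gorenstein since the only $n=5$ almost Gorenstein tuple provided by Theorem~\ref{thm:almGor} is $(1,1,1,1,2)$ (item (v) with $n=5$; item (vi) requires $n$ even). For $n\geq 6$ no level tuples exist in Theorem~\ref{thm:level}, so there is nothing to check.

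Collecting what remains yields exactly the two families stated in the corollary. The ``in particular'' clause will be dispatched by citing Proposition~\ref{prop:order}, which identifies $\kk[K_{r_1,\ldots,r_n}]$ with a Hibi ring whenever $n=2$ or $n=3$ with $r_1=1$; both surviving families fall into this range. I do not expect any genuine obstacle here, since each step reduces to inspecting finitely described lists; the only care needed is to recognize overlapping parameterizations (e.g.\ $(2,2)$ belonging to both $(m,m)$ and $(2,m)$, or $(1,1,1,1)$ appearing in both items (iv) and (v) of Theorem~\ref{thm:almGor}) so that no tuple is accidentally retained or discarded.
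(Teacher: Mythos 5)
Your proposal is correct and is exactly the argument the paper intends: the corollary is stated as an immediate consequence of Theorems~\ref{thm:level} and \ref{thm:almGor} together with the Gorenstein characterization, obtained by intersecting the two lists and removing the Gorenstein tuples, with the Hibi-ring identification supplied by Proposition~\ref{prop:order}. Your case-by-case bookkeeping (including the overlaps such as $(2,2)$ and $(1,1,1,1)$) checks out against the lists in the paper.
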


\begin{ex}[$n=2$ or $n=3$]
For $K_{r_1,r_2}$, we see that $\kk[K_{r_1,r_2}]$ is always level. Moreover, $\kk[K_{1,m}]$ and $\kk[K_{m,m}]$ are Gorenstein, 
while $\kk[K_{2,m}]$ is not Gorenstein but almost Gorenstein if $m \geq 3$. 

For $K_{r_1,r_2,r_3}$, we see that $\kk[K_{r_1,r_2,r_3}]$ is 
\begin{itemize}
\item level but not Gorenstein for $K_{1,1,m}, K_{1,2,m},K_{2,2,m}$ with $m \geq 3$ and $K_{3,3,3}$; 
\item almost Gorenstein but not Gorenstein if and only if $K_{1,1,m},K_{1,m,m}$ with $m \geq 3$; 
\item level and almost Gorenstein but not Gorenstein if and only if $K_{1,1,m}$ with $m \geq 3$. 
\end{itemize}
\end{ex}
\begin{ex}[$n=4$]
For $K_{r_1,r_2,r_3,r_4}$, we see that $\kk[K_{r_1,r_2,r_3,r_4}]$ is 
\begin{itemize}
\item level for $K_{1,1,1,m}$; 
\item almost Gorenstein for $K_{1,1,m,m}$; 
\item Gorenstein for $K_{1,1,1,1}$. 
\end{itemize}
\end{ex}
\begin{ex}[$n \geq 5$]
In the case $n \geq 5$, $\kk[K_{r_1,\ldots,r_n}]$ is never Gorenstein, and it is level only for $K_5$. 
On the other hand, it is almost Gorenstein for $K_{2m}$ with $m \geq 3$ and $K_{\underbrace{1,\ldots,1}_{n-1},n-3}$. 
\end{ex}

\subsection{Organization}
First, in Section~\ref{sec:def}, we recall the precise definitions of level rings and almost Gorenstein homogeneous rings 
and some known results on them, which we will use in the proofs of our main results. 
Next, in Section~\ref{sec:poset}, we discuss the main theorems in the case where $n=2$ or $n=3$ with $r_1=1$, 
which can be deduced into the results on Hibi rings. Thus, we also recall the results on Hibi rings. 
Finally, in Section~\ref{sec:proof}, we give proofs of Theorems~\ref{thm:level} and \ref{thm:almGor}. 

\subsection*{Acknowledgement} 
The first named author is partially supported by JSPS Grant-in-Aid for Scientific Research (C) 20K03513. 

\bigskip


\section{Levelness and almost Gorensteinness of homogeneous rings}\label{sec:def}

Throughout this section, let $R$ be a Cohen--Macaulay homogeneous ring of dimension $d$ over an algebraically closed field $\kk$ with characteristic $0$. 
We recall the definitions of levelness and almost Gorensteinness and some properties on those algebras. 

Before defining them, we recall some fundamental materials. (Consult, e.g., \cite{BH} for the introduction to homogeneous rings.) 
\begin{itemize}
\item Let $\omega_R$ denote a canonical module of $R$. Let $a(R)$ denote the $a$-invariant of $R$, i.e., $a(R)=-\min\{j:(\omega_R)_j \neq 0\}$. 
\item For a graded $R$-module $M$, we use the following notation: 
\begin{itemize}
\item Let $\mu_j(M)$ denote the number of elements of minimal generators of $M$ with degree $j$ as an $R$-module, 
and let $\mu(M)=\sum_{j \in \ZZ}\mu_j(M)$, i.e., the number of minimal generators. 
\item Let $e(M)$ denote the multiplicity of $M$. Then the inequality $\mu(M) \leq e(M)$ always holds. 
\item Let $M(-\ell)$ denote the $R$-module whose grading is given by $M(-\ell)_n=M_{n-\ell}$ for any $n \in \ZZ$. 
\end{itemize}
\item Let $r(R)$ denote the Cohen--Macaulay type of $R$. Note that $r(R)=\mu(\omega_R)$. 
We see that $R$ is Gorenstein if and only if $r(R)=1$. 
\item Let $H(M,m)$ denote the Hilbert function of $M$, i.e., $H(M,m)=\dim_\kk M_m$, where $\dim_\kk$ stands for the dimension as a $\kk$-vector space. 
Note that $H(M,m)$ can be described by a polynomial in $m$ of degree $d-1$ and its leading coefficient coincides with $e(M)/d!$. (See \cite[Section 4]{BH}.) 
\item Write $(h_0,\ldots,h_s)$ for the $h$-vector of $R$, 
which is the coefficients of the polynomial appearing in the numerator of the Hilbert series of $R$, i.e., 
$$\sum_{n =0}^\infty \dim_\kk R_n t^n=\frac{\sum_{i=0}^sh_it^i}{(1-t)^d},$$ 
where $h_s \neq 0$. We call the index $s$ the {\em socle degree} of $R$. 
Note that $h_s \leq r(R)$ holds. Moreover, we see that $d+a(R)=s$. (See \cite[Section 4.4]{BH}.) 
\end{itemize}

\begin{defi}[Level, {\cite{S77}}]
We say that $R$ is \textit{level} if all the degrees of the minimal generators of $\omega_R$ are the same. 
In particular, what $R$ is Gorenstein implies what $R$ is level since $\omega_R$ is generated by a unique element if $R$ is Gorenstein. 
\end{defi}
\begin{rem}\label{rem:level_Gor}
Let $(h_0,h_1,\ldots,h_s)$ be the $h$-vector of $R$. Assume that $R$ is level. In this case, if $h_s=1$, then $R$ is Gorenstein. 
In fact, since what $R$ is level is equivalent to $h_s=r(R)$ (see, e.g., \cite[Section 5]{BH}), we obtain that $r(R)=1$. 
\end{rem}
Regarding the levelness of homogeneous domains, we know the following: 
\begin{thm}[{\cite[Corollary 3.11]{Y}}]\label{thm:Y}
Let $R$ be an almost Gorenstein homogeneous domain with its socle degree $s$. If $s=2$, then $R$ is level. 
\end{thm}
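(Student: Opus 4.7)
The plan is to reduce to the Artinian case by modding out a linear regular sequence, and then extract a non-negativity constraint on the $h$-vector. Since $\kk$ is algebraically closed, the residue field is infinite, so sufficiently generic linear regular elements exist. I would invoke the standard descent of almost Gorensteinness under a superficial linear form (regular on $R$ and on $K_R$), iterating $d$ times to obtain an Artinian almost Gorenstein ring $\bar R$ with the same $h$-vector $(1, h_1, h_2)$ as $R$; this step preserves both levelness and Gorensteinness.

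Once in the Artinian case, the defining exact sequence $0 \to \bar R \to K_{\bar R} \to \bar C \to 0$ gives
$$\operatorname{Hilb}(\bar C, t) = (h_2 + h_1 t + t^2) - (1 + h_1 t + h_2 t^2) = (h_2 - 1) - (h_2 - 1)\, t^2.$$
Because $\bar R$ is Artinian, any Ulrich module over it must be a graded $\kk$-vector space: the equality $\mu(\bar C) = e(\bar C)$ becomes $\dim_\kk \bar C/\mathfrak{m}\bar C = \dim_\kk \bar C$, forcing $\mathfrak{m} \bar C = 0$. Hence the Hilbert coefficients of $\bar C$ must be non-negative; inspecting the coefficient of $t^2$ gives $h_2 - 1 \leq 0$, which together with $h_2 \geq 1$ forces $h_2 = 1$. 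Then $\bar C = 0$, so $\bar R \cong K_{\bar R}$ is Gorenstein; consequently $R$ is Gorenstein, and in particular level.

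The hard part will be the descent step: one must verify that a superficial linear form can be chosen to preserve the defining AG exact sequence (including the Ulrich condition on the cokernel) under modding out. In the graded setting with $\kk$ algebraically closed this is standard, and the domain hypothesis enters in ensuring that the inclusion $R \hookrightarrow K_R$ behaves well under reduction; once the descent is in place, the remainder is the brief Hilbert-series calculation above.
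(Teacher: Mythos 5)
The step that fails is the descent to an Artinian reduction. In the graded definition of almost Gorensteinness the cokernel $C$ in $0 \to R \to \omega_R(-a) \to C \to 0$ has dimension $d-1$, one less than $R$, so a general linear form can be taken regular on $R$, $\omega_R$ and $C$ simultaneously only $d-1$ times. After those $d-1$ reductions $C$ has become a finite-length module $C'$ with $\mathfrak{m}C'=0$ (this is where $\mu(C)=e(C)$ is used), and the $d$-th linear form $\theta$ then satisfies $(0:_{C'}\theta)=C'$; applying $-\otimes R'/(\theta)$ to the reduced sequence yields the four-term exact sequence $0 \to C'(-1) \to \bar R \to \omega_{\bar R}(-a) \to C' \to 0$. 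Hence the map $\bar R \to \omega_{\bar R}(-a)$ is injective only when $C'=0$, i.e.\ only when $R$ was already Gorenstein: the Artinian exact sequence on which your Hilbert-series computation rests does not exist otherwise. (Indeed, a degree-preserving injection $\bar R \hookrightarrow \omega_{\bar R}(-a)$ of Artinian modules would force $h_i \le h_{s-i}$ for all $i$, hence, comparing total lengths, a symmetric $h$-vector.) The negative coefficient $-(h_2-1)$ you obtain for $\operatorname{Hilb}(\bar C,t)$ is precisely the signal that the sequence cannot survive the last reduction; it is not a constraint forcing $h_2=1$.

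A sanity check shows the approach cannot be repaired: your argument never uses $s=2$. For general $s$ the same computation gives $\operatorname{Hilb}(\bar C,t)=\sum_i(h_{s-i}-h_i)t^i$, and non-negativity would force the $h$-vector to be symmetric, hence $R$ Gorenstein by Stanley's theorem for Cohen--Macaulay homogeneous domains. Your proof would therefore show that every almost Gorenstein homogeneous domain is Gorenstein, which is contradicted by, e.g., $\kk[K_{1,1,2,2}]$ in this very paper. (Under the hypotheses as literally stated the conclusion ``Gorenstein'' does happen to be true, but for a different reason: Theorem~\ref{thm:h_s=1} gives $h_2=1$, so the $h$-vector $(1,h_1,1)$ is symmetric and Stanley's theorem applies. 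Note, though, that the paper later applies Theorem~\ref{thm:Y} to $\kk[K_{1,1,1,r_4}]$, which is level but neither Gorenstein nor almost Gorenstein; Yanagawa's Corollary~3.11 is really a statement about Cohen--Macaulay homogeneous domains of socle degree $2$ with no almost Gorenstein hypothesis, so no argument that reduces to the Gorenstein case can capture it.) The paper gives no proof of this theorem---it is quoted from \cite{Y}---and the genuine content lies in Yanagawa's Castelnuovo-type bounds on $h$-vectors of domains, not in a reduction of the kind you propose.
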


\begin{defi}[Almost Gorenstein, {\cite[Definition 1.5]{GTT}}]
We call $R$ {\em almost Gorenstein} 
if there exists an exact sequence of graded $R$-modules 
\begin{align}\label{eq:exact}
0 \to R \to \omega_R(-a) \to C \to 0
\end{align}
with $\mu(C)=e(C)$. 
\end{defi}
Note that there always exists a degree-preserving injection from $R$ to $\omega_R(-a)$ if $R$ is a domain (\cite[Proposition 2.2]{H}). 
Moreover, we see that $\mu(C)=r(R)-1$ (\cite[Proposition 2.3]{H}), which we will use later. 

Regarding the almost Gorensteinness of homogeneous domains, we know the following: 
\begin{thm}[{\cite[Theorem 4.7]{H}}]\label{thm:h_s=1}
Let $R$ be an almost Gorenstein homogeneous domain and $(h_0,h_1,\ldots,h_s)$ its $h$-vector with $s \geq 2$. Then $h_s=1$. 
\end{thm}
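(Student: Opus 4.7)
The plan is to extract strong constraints from the Hilbert series of the almost Gorenstein exact sequence. By the duality formula $\mathrm{Hilb}(\omega_R,t) = (-1)^d \mathrm{Hilb}(R,1/t)$ for a Cohen--Macaulay graded ring, together with the shift by the $a$-invariant (using $a=s-d$), one computes
\[
\mathrm{Hilb}(\omega_R(-a),t) = \frac{\sum_{i=0}^{s} h_{s-i}\,t^{i}}{(1-t)^{d}},
\]
so the sequence $0 \to R \to \omega_R(-a) \to C \to 0$ yields $\mathrm{Hilb}(C,t) = N(t)/(1-t)^{d}$ with $N(t) = \sum_{i=0}^{s}(h_{s-i}-h_i)\,t^i$. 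Since $N(1)=0$, if $C=0$ then $R$ is Gorenstein and $h_s=1$. Otherwise $\dim C \le d-1$, and the depth lemma applied to the short exact sequence forces $\mathrm{depth}(C)\ge d-1$, so $C$ is Cohen--Macaulay of dimension exactly $d-1$.

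Next I would exploit the equality $\mu(C)=e(C)$ in its Ulrich form. Because $\kk$ is algebraically closed there is a linear system of parameters $y_1,\dots,y_{d-1}$ for $C$, and the natural surjection $C/(y_1,\dots,y_{d-1})C \twoheadrightarrow C/\mathfrak m C$ is between $\kk$-vector spaces of dimensions $e(C)$ and $\mu(C)$; the hypothesis forces this to be an isomorphism, i.e.\ $\mathfrak m C = (y_1,\dots,y_{d-1})C$. Regularity of the $y_i$ on $C$ then gives
\[
\mathrm{Hilb}(C,t) = \frac{\sum_{j\ge 0} n_j\,t^{j}}{(1-t)^{d-1}}, \qquad n_j := \mu_j(C) \ge 0.
\]
Matching the two expressions forces $N(t) = (1-t)\sum n_j t^j$, and equating coefficients yields $n_j - n_{j-1} = h_{s-j}-h_j$ for $0 \le j \le s$ (with $n_{-1}=n_s=0$). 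Telescoping produces $n_0 = h_s-1$ and, from the top coefficient, $n_{s-1} = h_s-1$.

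The crux, and the main obstacle, is to promote the identity $n_{s-1}=h_s-1$ to the conclusion $h_s=1$: the numerical inequalities $n_j\ge 0$ alone do not suffice, since for example the $h$-vector $(1,1,2)$ passes them. Suppose for contradiction that $h_s \ge 2$, so that $C$ has a minimal generator $\overline u$ of degree $s-1$. I would lift $\overline u$ to an element $u \in (\omega_R)_{d-1}$ lying outside $R_{s-1}\cdot f$, where $f \in (\omega_R)_{-a}$ is the element of the canonical module inducing the embedding $R\hookrightarrow \omega_R(-a)$. The domain hypothesis is essential here: $\omega_R$ is a rank-one torsion-free $R$-module, so multiplication by any nonzero linear form on $\omega_R$ is injective. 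Combining this injectivity with the Ulrich relation $\mathfrak m C = (y_1,\dots,y_{d-1})C$ to control the image of $R_1 \cdot u$ inside $(\omega_R)_d$ should force $u$ back into $R_{s-1}\cdot f$, contradicting the minimality of $\overline u$. Making this module-theoretic contradiction airtight is the delicate step of the argument, and is where I expect the proof in \cite{H} to do its real work.
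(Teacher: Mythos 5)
Your Hilbert-series setup is correct and matches the standard reduction: the cokernel $C$ is Cohen--Macaulay of dimension $d-1$, the hypothesis $\mu(C)=e(C)$ makes $C$ an Ulrich module, so $\mathfrak{m}C=(y_1,\dots,y_{d-1})C$ for general linear forms, $\operatorname{Hilb}(C,t)=\sum_j\mu_j(C)t^j/(1-t)^{d-1}$, and hence $\mu_j(C)=\sum_{i=0}^{j}(h_{s-i}-h_i)$, in particular $\mu_0(C)=\mu_{s-1}(C)=h_s-1$. You also correctly diagnose that these nonnegativity constraints (which are just Stanley's inequalities for Cohen--Macaulay homogeneous domains) cannot by themselves exclude $h_s\ge 2$. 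But the proof stops exactly where the theorem starts: the entire content of the statement is that $C$ cannot have a minimal generator in degree $s-1\ge 1$, and for that you offer only the hope that torsion-freeness of $\omega_R$ combined with the Ulrich relation ``should force $u$ back into $R_{s-1}\cdot f$.'' That implication is not established and does not follow formally from the ingredients you name: the Ulrich relation gives $R_1\cdot u\subseteq (y_1,\dots,y_{d-1})\,\omega_R+R_s\,f$ inside $(\omega_R)_d$, and injectivity of multiplication by a linear form $z$ on the torsion-free module $\omega_R$ lets you recover $u$ from $zu$, but it does not let you divide the membership $zu\in(y_1,\dots,y_{d-1})\omega_R+Rf$ by $z$. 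Concretely, since the one generator lost in passing from $\omega_R(-a)$ to $C$ sits in degree $0$, for $s\ge 2$ one has $\mu_{s-1}(C)=\mu_{s-1}(\omega_R(-a))=\dim_\kk\operatorname{soc}(A)_1$, where $A$ is a general Artinian reduction of $R$; so what you must actually prove is that a homogeneous domain with $s\ge 2$ has no degree-one socle in its general Artinian reduction. This is a Castelnuovo-type statement about general hyperplane sections of integral varieties (the theme of \cite{Y}), not a formal consequence of $\omega_R$ having rank one, and it is precisely the step you have explicitly deferred to \cite{H}.

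Note also that the paper under review offers no proof to compare against: Theorem~\ref{thm:h_s=1} is imported verbatim from \cite[Theorem 4.7]{H}. As it stands, your proposal is a correct and useful reduction of the theorem to the vanishing of $\mu_{s-1}(C)$, but it is not a proof of it.
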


\bigskip


\section{In the case of Hibi rings}\label{sec:poset}
In this section, we recall Hibi rings and the characterization results on Gorensteinness, levelness and almost Gorensteinness of Hibi rings.

Let $\Pi=\{p_1,\ldots,p_{d-1}\}$ be a finite partially ordered set (poset, for short) equipped with a partial order $\prec$. 
For $p,q \in \Pi$, we say that \textit{$p$ covers $q$} if $q \prec p$ and there is no $p' \in \Pi \setminus \{p,q\}$ with $q \prec p' \prec p$. 
For a subset $I \subset \Pi$, we say that $I$ is a \textit{poset ideal} of $\Pi$ if $p \in I$ and $q \prec p$ then $q \in I$. 
Let $\calI(\Pi)$ denote the set of all poset ideals in $\Pi$. Note that $\emptyset \in \calI(\Pi)$.

We define the $\kk$-algebra $\kk[\Pi]$ as a subalgebra of the polynomial ring $\kk[x_1,\ldots,x_{d-1},t]$ by setting 
$$\kk[\Pi]:=\kk[{\bf x}^I t : I \in \calI(\Pi)],$$
where ${\bf x}^I=\prod_{p_i \in I}x_i$ for $I \subset \Pi$ and each ${\bf x}^I t$ is defined to be of degree $1$. 
The standard graded $\kk$-algebra $\kk[\Pi]$ is called the \textit{Hibi ring} of $\Pi$. 
The following fundamental properties on Hibi rings were originally proved in \cite{H87}: 
\begin{itemize}
\item The Krull dimension of $\kk[\Pi]$ is equal to $|\Pi|+1$; 
\item $\kk[\Pi]$ is a Cohen--Macaulay normal domain; 
\item $\kk[\Pi]$ is an algebra with straightening laws on $\Pi$. 
\end{itemize}

We say that a poset $\Pi$ is {\em pure} if each maximal chain contained in the poset has the same length, 
where the length of the chain $p_{i_0} \prec p_{i_1} \prec \cdots \prec p_{i_\ell}$ is defined to be $\ell$. 
The following is well known: 
\begin{thm}[{\cite{H87}}]
The Hibi ring $\kk[\Pi]$ of $\Pi$ is Gorenstein if and only if $\Pi$ is pure. 
\end{thm}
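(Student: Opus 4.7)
The plan is to realize $\kk[\Pi]$ as the toric ring of the order polytope $\mathcal{O}(\Pi) := \{f \colon \Pi \to [0,1] \mid f(p) \leq f(q) \text{ whenever } p \preceq q\}$, whose vertex set is $\{\mathbf{1}_I : I \in \calI(\Pi)\}$, and then read off Gorensteinness from an explicit description of the canonical module. Since $\kk[\Pi]$ is a normal Cohen--Macaulay domain, it is Gorenstein if and only if $\omega_{\kk[\Pi]}$ is a principal $\kk[\Pi]$-module, so the task reduces to translating principality into the combinatorics of $\Pi$.

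I would then apply the Danilov--Stanley formula to identify the degree-$j$ component of $\omega_{\kk[\Pi]}$ with the $\kk$-span of the lattice points in the relative interior of $j\cdot\mathcal{O}(\Pi)$. Reading off the facet description, such an interior lattice point corresponds to a map $f \colon \hat{\Pi} \to \{0,1,\ldots,j\}$ with $f(\hat{0}) = 0$, $f(\hat{1}) = j$, and $f(p) < f(q)$ whenever $p \prec q$ is a covering relation of $\hat{\Pi} := \Pi \sqcup \{\hat{0},\hat{1}\}$. A pigeonhole argument along chains shows such $f$ exists iff $j \geq \ell + 2$, where $\ell$ is the length of a longest chain of $\Pi$; thus the minimum-degree minimal generators of $\omega_{\kk[\Pi]}$ are precisely the interior lattice points at $j = \ell + 2$.

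To handle both directions simultaneously, I would consider the two canonical candidates at $j = \ell + 2$: $f_0(p) := \rank_{\hat{\Pi}}(p)$ and $f_0'(p) := (\ell+2) - \operatorname{corank}_{\hat{\Pi}}(p)$, where $\operatorname{corank}$ denotes the length of the longest chain from $p$ to $\hat{1}$. Both are strict on covers and hence define interior lattice points at $j = \ell+2$, and they coincide exactly when $\rank_{\hat{\Pi}}(p) + \operatorname{corank}_{\hat{\Pi}}(p) = \ell + 2$ for every $p$, i.e., iff every element of $\hat{\Pi}$ lies on a longest chain, which an order-theoretic induction along any maximal chain $c_0 \prec \cdots \prec c_k$ shows is equivalent to purity of $\Pi$ (the induction forces $\rank(c_i) = i$ and $\operatorname{corank}(c_i) = \ell - i$, so $k = \ell$). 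Thus, if $\Pi$ is not pure, $f_0 \neq f_0'$ yields two minimal generators of $\omega_{\kk[\Pi]}$, so $\omega_{\kk[\Pi]}$ is not principal and $\kk[\Pi]$ is not Gorenstein. If $\Pi$ is pure, $f_0 = f_0'$ is the unique minimum-degree generator, and for any higher-degree interior lattice point $f$ the difference $g := f - f_0$ is non-negative (from the lower bounds on $f$), order-preserving on $\Pi$ (since $f_0$ increases by exactly $1$ across each cover of $\hat{\Pi}$), and bounded above by $j - \ell - 2$ (from the upper bounds on $f$, using purity); stratifying by the level sets $\{p \in \Pi : g(p) \geq i\}$ writes $g$ as a sum of indicator functions of poset ideals, giving $f \in f_0 \cdot \kk[\Pi]$ and hence principality of $\omega_{\kk[\Pi]}$.

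The hard part will be the two order-theoretic identities that underlie the pure case: the equality $\rank_{\hat{\Pi}}(p) + \operatorname{corank}_{\hat{\Pi}}(p) = \ell + 2$ for all $p$, and the consequent bound $g(p) \leq j - \ell - 2$ used to decompose $g$. I expect each to follow from tracing a maximum-length chain through $p$ and exploiting that, under purity, any such chain has length exactly $\ell + 2$, after which the stratification into a multichain of poset ideals is automatic from $g$ being an order-preserving $\ZZ_{\geq 0}$-valued function.
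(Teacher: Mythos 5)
The paper does not prove this statement; it is quoted from Hibi's 1987 paper, so there is no in-paper argument to compare against. Judged on its own terms, your ``if'' direction (pure $\Rightarrow$ Gorenstein) is essentially sound: under purity $\rank_{\widehat{\Pi}}$ increases by exactly $1$ across every cover, so $g=f-f_0$ is order-preserving and its level sets decompose it into indicators of degree-one generators, giving principality of $\omega_{\kk[\Pi]}$. (Minor point: with the paper's convention $\kk[\Pi]=\kk[\xb^It]$ for poset \emph{ideals} $I$, the vertices of the relevant order polytope are indicators of ideals, so your level sets $\{p: g(p)\ge i\}$ come out as filters; this is only an orientation mismatch and is harmless.)

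The ``only if'' direction has a genuine gap. Your argument rests on the claim that $f_0=f_0'$, i.e.\ $\rank_{\widehat{\Pi}}(p)+\operatorname{corank}_{\widehat{\Pi}}(p)=\ell+2$ for all $p$, is equivalent to purity. It is not: ``every element lies on a maximum-length chain'' is strictly weaker than ``all maximal chains have maximum length.'' Concretely, let $\Pi=\{a,b,c,x,y,z\}$ with covering relations $a\prec b\prec c$, $x\prec y\prec z$ and $a\prec z$ (nothing lies between $a$ and $z$). In $\widehat{\Pi}$ the chain $\hat{0}\prec a\prec z\prec\hat{1}$ is maximal of length $3$ while $\hat{0}\prec a\prec b\prec c\prec\hat{1}$ has length $4$, so $\Pi$ is not pure; yet $\rank+\operatorname{corank}$ equals $4$ at every element ($a,x$ have rank $1$ and corank $3$; $b,y$ have $2$ and $2$; $c,z$ have $3$ and $1$), so $f_0=f_0'$. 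Your claimed induction ``$\rank(c_i)=i$ along any maximal chain'' fails here because a cover can raise rank by more than $1$ in a non-pure poset. Worse, in this example the degree-$(\ell+2)=4$ interior lattice point is \emph{unique} (the chains $\hat0\prec a\prec b\prec c\prec\hat1$ and $\hat0\prec x\prec y\prec z\prec\hat1$ force all values), so $h_s=1$ and no argument confined to the initial degree of $\omega_{\kk[\Pi]}$ can detect non-Gorensteinness; the obstruction is a minimal generator in degree $5$ (e.g.\ the map with $f(a)=2$, $f(z)=3$, $f(b)=3$, $f(c)=4$, $f(x)=1$, $f(y)=2$, which is not $f_0$ plus a lattice point of the order polytope). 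Detecting such higher-degree generators is exactly what the sequence/``condition N''-type analysis in Hibi's and Miyazaki's proofs is designed for, and your proposal has no mechanism to produce them.
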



\medskip

Given positive integers $m$ and $n$, let $\Pi_{m,n}=\{p_1,\ldots,p_m,p_{m+1},\ldots,p_{m+n}\}$ be the poset equipped with the partial orders 
$p_1 \prec \cdots \prec p_m$ and $p_{m+1} \prec \cdots \prec p_{m+n}$. 
Moreover, let $\Pi_{m,n}'$ be the poset having an additional relation $p_1 \prec p_{m+n}$. 
Note that 
the Hibi ring $\kk[\Pi_{m,n}]$ is isomorphic to the Segre product of 
the polynomial ring with $(m+1)$ variables and the polynomial ring with $(n+1)$ variables (see, e.g., \cite[Example 2.6]{HN}). 


Actually, the certain edge rings are isomorphic to the Hibi rings $\kk[\Pi_{m,n}]$ and $\kk[\Pi_{m,n}']$ as follows: 
\begin{prop}[{\cite[Proposition 2.2]{HM}}]\label{prop:order} 
The edge ring $\kk[K_{m+1,n+1}]$ (resp. $\kk[K_{1,m,n}]$) is isomorphic to the Hibi ring $\kk[\Pi_{m,n}]$ (resp. $\kk[\Pi_{m,n}']$). 
\end{prop}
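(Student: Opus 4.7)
The plan is to construct explicit $\kk$-algebra isomorphisms by matching monomial generators and exploiting the toric nature of both sides. Since both rings are normal affine semigroup rings generated in degree one, it suffices to exhibit a bijection between the minimal monomial generators that comes from a unimodular isomorphism of the underlying affine monoids.

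For part (1), the poset $\Pi_{m,n}$ is a disjoint union of two chains, so $\calI(\Pi_{m,n})$ is parametrized by pairs $(a,b)\in\{0,\ldots,m\}\times\{0,\ldots,n\}$, giving $(m+1)(n+1)$ generators $g_{a,b}=x_1\cdots x_a\cdot x_{m+1}\cdots x_{m+b}\cdot T$ of $\kk[\Pi_{m,n}]$. Labelling the two color classes of $K_{m+1,n+1}$ as $\{v_0,\ldots,v_m\}$ and $\{w_0,\ldots,w_n\}$, the candidate map sends $g_{a,b}\mapsto t_{v_a}t_{w_b}$. Both sides then identify with the Segre product $\kk[y_0,\ldots,y_m]\mathbin{\#}\kk[z_0,\ldots,z_n]$: for the edge ring this is immediate from the definition, while on the Hibi side the unimodular change of coordinates $y_a:=x_a-x_{a+1}$ on the first chain (with conventions $x_0:=T$ and $x_{m+1}:=0$), and the analogous $z_b$ on the second, sends $g_{a,b}$ to $y_a z_b$ and realizes the identification.

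For part (2), the new relation $p_1\prec p_{m+n}$ removes from $\calI(\Pi_{m,n})$ exactly one ideal, namely $\{p_{m+1},\ldots,p_{m+n}\}$ (which contains $p_{m+n}$ but not $p_1$), so $|\calI(\Pi'_{m,n})|=(m+1)(n+1)-1=m+n+mn=|E(K_{1,m,n})|$. Labelling the three color classes of $K_{1,m,n}$ as $\{v_0\}$, $\{u_1,\ldots,u_m\}$, and $\{w_1,\ldots,w_n\}$, I would define the bijection
\[
\emptyset\leftrightarrow\{v_0,w_n\},\quad I_{a,0}\leftrightarrow\{v_0,u_a\},\quad I_{0,b}\leftrightarrow\{v_0,w_{n-b}\},\quad I_{a,b}\leftrightarrow\{u_a,w_{n-b}\},\quad J_a\leftrightarrow\{u_a,w_n\},
\]
where $I_{a,b}=\{p_1,\ldots,p_a,p_{m+1},\ldots,p_{m+b}\}$ for $1\le a\le m$, $1\le b\le n-1$, and $J_a=\{p_1,\ldots,p_a,p_{m+1},\ldots,p_{m+n}\}$ for $1\le a\le m$. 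Verifying that the induced $\varphi$ extends to a $\kk$-algebra isomorphism then reduces to checking that this bijection respects the Hibi straightening relations $g_Ig_J=g_{I\cap J}g_{I\cup J}$, or equivalently, that it is induced by a unimodular lattice isomorphism of the underlying affine semigroups in $\ZZ^{m+n+1}$ and $\ZZ^{1+m+n}$.

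The main obstacle is the verification in part (2), since the clean Segre-product shortcut is no longer available: the edges $\{v_0,w_n\}$ and $\{u_a,w_n\}$ play exceptional roles reflecting the effect of the new covering relation $p_1\prec p_{m+n}$ and the forbidden ideal $\{p_{m+1},\ldots,p_{m+n}\}$. The cleanest route is to write the lattice change-of-coordinates down explicitly: the first chain contributes a simplex factor exactly as in part (1), while the second chain together with the new relation yields a sheared polytope whose vertices account for both the $I_{a,b}$'s and the $J_a$'s simultaneously. A direct check on a small case, e.g.\ $m=n=1$ where $K_{1,1,1}$ is the triangle $K_3$, confirms the correspondence, and the general argument follows the same pattern.
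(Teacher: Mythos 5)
Part (1) of your argument is correct and is the standard identification: both rings are the Segre product of $\kk[y_0,\ldots,y_m]$ and $\kk[z_0,\ldots,z_n]$, and your difference-of-exponents change of coordinates is the usual unimodular equivalence. (The paper itself offers no proof of this proposition; it is quoted from \cite{HM}, so there is no internal argument to compare against.)

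Part (2), however, has a genuine error: the bijection you propose does not respect the multiplicative structure, so it cannot extend to a $\kk$-algebra homomorphism, let alone an isomorphism. Take $m\ge 1$ and $n\ge 2$ and consider the Hibi relation $g_{\emptyset}\,g_{I_{1,1}}=g_{I_{1,0}}\,g_{I_{0,1}}$, which holds because $I_{1,0}\cap I_{0,1}=\emptyset$ and $I_{1,0}\cup I_{0,1}=I_{1,1}$ (both sides equal $x_1x_{m+1}t^2$). Under your assignment the left side maps to $t_{v_0}t_{w_n}\cdot t_{u_1}t_{w_{n-1}}$ and the right side to $t_{v_0}t_{u_1}\cdot t_{v_0}t_{w_{n-1}}$, which are distinct monomials in $\kk[K_{1,m,n}]$. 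The source of the trouble is that you have attached the apex $v_0$ to the wrong family of ideals: it is the ideals $J_a=I_{a,n}$ containing the top element $p_{m+n}$ of the second chain (i.e.\ exactly those constrained by the new relation $p_1\prec p_{m+n}$) that must correspond to the edges $\{v_0,u_a\}$, while $I_{a,0}$ with $a\ge 1$ must go to edges of the form $\{u_a,w_\bullet\}$. A correct lattice map can be written down: a degree-$N$ monomial of $\kk[\Pi'_{m,n}]$ is a point with $N\ge z_1\ge\cdots\ge z_m\ge 0$, $N\ge y_1\ge\cdots\ge y_n\ge 0$ and $z_1\ge y_n$; sending it to $x_{u_a}=z_a-z_{a+1}$ (with $z_{m+1}:=0$), $x_{w_1}=N-y_1$, $x_{w_b}=y_{b-1}-y_b$ for $b\ge 2$, and $x_{v_0}=N-z_1+y_n$ is unimodular, turns the extra poset inequality $z_1\ge y_n$ into the facet inequality $x_{v_0}\le N$ of $NP_{K_{1,m,n}}$, and on generators gives $I_{0,b}\mapsto\{v_0,w_{b+1}\}$ and $I_{a,b}\mapsto\{u_a,w_{b+1}\}$ for $0\le b\le n-1$, and $J_a\mapsto\{v_0,u_a\}$. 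Finally, note that your sanity check at $m=n=1$ cannot detect the error: there $\Pi'_{1,1}$ is a two-element chain and $K_{1,1,1}=K_3$, so both rings are polynomial rings in three variables and \emph{every} bijection of generators extends to an isomorphism; the first nondegenerate case is $n=2$, where the failure above already occurs. The step you defer with ``the general argument follows the same pattern'' is precisely the whole content of the tripartite case.
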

Therefore, in the case where $n=2$ or $n=3$ with $r_1=1$, the characterization of levelness and almost Gorensteinness of 
$\kk[K_{r_1,\ldots,r_n}]$ can be deduced into those of the Hibi rings $\kk[\Pi_{m,n}]$ and $\kk[\Pi_{m,n}']$. 

Hence, in what follows, we give the characterizations of those properties for $\kk[\Pi_{m,n}]$ and $\kk[\Pi_{m,n}']$, 
which prove Theorems~\ref{thm:level} and \ref{thm:almGor} in the case where $n=2$ or $n=3$ with $r_1=1$. 

\medskip

Miyazaki gave the characterizations of levelness \cite{M17} and almost Gorensteinness \cite{M18} of Hibi rings. 
For explaining those results, we introduce some notions. 

Given a poset $\Pi$, let $\widehat{\Pi}:=\Pi \cup \{\hat{0},\hat{1}\}$, where $\hat{0}$ (resp. $\hat{1}$) denotes a new unique minimal (maximal) element. 
\begin{itemize}
\item For $x,\ y\in \Pi$ with $x\preceq y$, we set $[x,y]_\Pi:=\{z\in \Pi:x\preceq z\preceq y\}$. 
\item We define $\rank P$ to be the maximal lenth of the chains in $\Pi$, and define $\rank [x,y]_\Pi$ analogously.
\item Let $y_1,x_1,y_2,x_2,\ldots,y_t,x_t$ be a (possibly empty) sequence of elements in $\widehat{\Pi}$. 
We say that the sequence $y_1,x_1,y_2,x_2,\ldots,y_t,x_t$ satisfies \textit{condition N} if \\
(1) $x_1\ne \hat{0}$, \\
(2) $y_1\succ x_1\prec y_2 \succ x_2 \prec \cdots \prec y_t \succ x_t$, and \\
(3) $y_i \nsucceq x_j$ for any $i,j$ with $1\le i<j\le t$.
\item Let 
$$r(y_1,x_1,\ldots,y_t,x_t):=\sum_{i\in [t]}(\rank [x_{i-1},y_i]_{\widehat{\Pi}} -\rank [x_i,y_i]_{\widehat{\Pi}})+\rank [x_t,\hat{1}]_{\widehat{\Pi}},$$
where we set an empty sum to be 0 and $x_0=\hat{0}$. 
\item Given $x \in \Pi$, let 
$$\star_\Pi(x)=\{y \in \Pi : y \preceq x \text{ or }x \preceq y\}.$$
\end{itemize}

\begin{thm}[{\cite[Theorem 3.9]{M17}}]\label{levelthm}
The Hibi ring of $\Pi$ is level if and only if 
$$r(y_1,x_1,\ldots,y_t,x_t)\le \rank \widehat{\Pi}$$ holds for any sequence of elements in $\widehat{P}$ with condition N.
\end{thm}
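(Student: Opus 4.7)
The plan is to use the toric description of $\kk[\Pi]$ to make the canonical module completely explicit, and then translate levelness into a combinatorial condition on $\Pi$. Since $\kk[\Pi]$ is a normal affine semigroup ring coming from the order polytope $\calO(\Pi)$, the Danilov--Stanley formula gives $\omega_{\kk[\Pi]}$ a $\kk$-basis indexed by interior lattice points of $\cone(\calO(\Pi)\times\{1\})$. These are in bijection with strict order-reversing maps $\nu\colon \widehat{\Pi}\to \ZZ_{\geq 0}$ satisfying $\nu(\hat{1})=0$ and $\nu(p)>\nu(q)$ whenever $p\prec q$ is a cover in $\widehat{\Pi}$, with the internal grading read off as $\nu(\hat{0})$.

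First I would identify which $\nu$ give \emph{minimal} generators of $\omega_{\kk[\Pi]}$ as a graded $\kk[\Pi]$-module. Combinatorially, $\nu$ fails to be minimal iff it can be decomposed as $\nu=\nu'+\chi_I$ where $\nu'$ is another such strict order-reversing map and $\chi_I$ is the $0/1$-indicator of a poset ideal $I$ of $\Pi$. Thus $\nu$ is a minimal generator exactly when for every poset ideal $I$ there is a cover $p\lessdot q$ in $\widehat{\Pi}$ across $\partial I$ where $\nu(p)-\nu(q)=1$. I would argue that the loci of such forced tightness assemble, after a canonical greedy procedure, into zigzag paths in $\widehat{\Pi}$ of the form $\hat{0}\prec\cdots\prec y_1\succ\cdots\succ x_1\prec\cdots\prec y_2\succ\cdots\prec\hat{1}$, whose turning points $(y_1,x_1,\ldots,y_t,x_t)$ must satisfy clauses (1)--(3) of condition N.

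Second, I would compute the degree $\nu(\hat{0})$ in terms of the associated N-sequence. Along each ascending segment $[x_{i-1},y_i]_{\widehat{\Pi}}$ the order-reversing map $\nu$ must \emph{increase} by exactly $\rank[x_{i-1},y_i]_{\widehat{\Pi}}$ when read from $y_i$ down to $x_{i-1}$, so $\nu(x_{i-1})-\nu(y_i)=\rank[x_{i-1},y_i]_{\widehat{\Pi}}$; similarly along each descending segment $[x_i,y_i]$ one has $\nu(x_i)-\nu(y_i)=\rank[x_i,y_i]_{\widehat{\Pi}}$. Telescoping from $\nu(\hat{0})$ down to $\nu(\hat{1})=0$ yields exactly $\nu(\hat{0})=r(y_1,x_1,\ldots,y_t,x_t)$. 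The empty N-sequence ($t=0$) recovers the ``staircase'' minimal generator of minimum possible degree $\rank\widehat{\Pi}$, which is always realized. Levelness is then the statement that every minimal generator has this same degree, which is exactly the displayed inequality $r(y_1,x_1,\ldots,y_t,x_t)\le \rank\widehat{\Pi}$.

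The principal obstacle will be showing that condition N captures \emph{precisely} the minimal generators, not merely a sufficient family. The forward direction—that each minimal generator yields a canonical N-sequence—should follow from a greedy extraction algorithm reading off the blocked ascents and descents of $\nu$. The converse, that every N-sequence actually arises from a genuine minimal generator rather than being absorbed into a larger obstruction, is subtler: clause (3), $y_i\not\succeq x_j$ for $i<j$, is precisely the nonredundancy requirement preventing one zigzag segment from being swallowed by another, and producing an explicit $\nu$ realizing a prescribed N-sequence while verifying minimality will be the delicate heart of the argument.
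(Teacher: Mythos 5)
First, a caveat: the paper does not prove this statement at all --- it is quoted verbatim as \cite[Theorem~3.9]{M17} and used as a black box, so there is no in-paper proof to compare against. Your plan is, in outline, the same route Miyazaki actually takes: describe $\omega_{\kk[\Pi]}$ via strict order-reversing maps $\nu\colon\widehat{\Pi}\to\ZZ_{\ge 0}$ with $\nu(\hat{1})=0$ (the interior lattice points of the cone over the order polytope), characterize minimal generators by the impossibility of subtracting the indicator $\chi_I$ of a poset ideal, and encode the resulting obstructions as zigzags satisfying condition~N whose $r$-value computes the degree $\nu(\hat{0})$.

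That said, as a proof the proposal has a genuine gap, and it sits exactly where you say the ``delicate heart'' is. The entire content of the theorem is the two-way correspondence: (i) every minimal generator $\nu$ of degree $>\rank\widehat{\Pi}$ yields an N-sequence with $r(y_1,x_1,\ldots,y_t,x_t)=\nu(\hat{0})$, and (ii) every N-sequence is realized by an actual minimal generator of degree $r(y_1,x_1,\ldots,y_t,x_t)$ (or at least forces one of degree $\ge$ that value). You assert (i) via an unspecified ``canonical greedy procedure'' without showing that the extracted turning points satisfy clause (3) of condition~N, and without justifying the tightness claims $\nu(x_{i-1})-\nu(y_i)=\rank[x_{i-1},y_i]_{\widehat{\Pi}}$ and $\nu(x_i)-\nu(y_i)=\rank[x_i,y_i]_{\widehat{\Pi}}$ --- for an arbitrary minimal generator one only gets inequalities a priori, and the telescoping identity $\nu(\hat{0})=r(\ldots)$ collapses without them. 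For (ii) you explicitly do not construct the realizing $\nu$ nor verify its minimality (i.e., that for \emph{every} poset ideal $I$ some cover $p\lessdot q$ across $\partial I$ has $\nu(p)-\nu(q)=1$). Since both directions of the stated equivalence depend on these unproved steps, the proposal is a correct strategy but not yet a proof; the missing combinatorial lemmas are precisely the technical core of \cite[Section~3]{M17}.
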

\begin{cor}[{\cite[Corollary 3.10]{M17}}]\label{levelcor}
If $[x,\hat{1}]_{\widehat{\Pi}}$ is pure for any $x\in \Pi$, then $\kk[\Pi]$ is level. 
\end{cor}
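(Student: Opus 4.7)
The plan is to verify the sufficient condition supplied by Theorem~\ref{levelthm}: assuming $[x,\hat{1}]_{\widehat{\Pi}}$ is pure for every $x\in\Pi$, I would show
$$r(y_1,x_1,\ldots,y_t,x_t)\le\rank\widehat{\Pi}$$
for every sequence in $\widehat{\Pi}$ satisfying condition N. The case $t=0$ is trivial (the quantity reduces to $\rank\widehat{\Pi}$), so I focus on $t\ge 1$.

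The first step would be a small bookkeeping observation: every $x_i$ lies in $\Pi$, so the purity hypothesis is actually applicable to each interval $[x_i,\hat{1}]_{\widehat{\Pi}}$ appearing in the formula for $r$. Indeed, $x_1\ne\hat{0}$ by condition~(1); for $j\ge 2$, $x_j=\hat{0}$ would violate condition~(3) via $y_1\succeq\hat{0}$; and $x_i=\hat{1}$ would contradict $y_i\succ x_i$.

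The key algebraic input would be the standard identity for pure intervals: if $[x,\hat{1}]_{\widehat{\Pi}}$ is pure and $x\preceq y$ in $\widehat{\Pi}$, then concatenating a maximal chain from $x$ to $y$ with a maximal chain from $y$ to $\hat{1}$ produces a maximal chain of $[x,\hat{1}]_{\widehat{\Pi}}$, so that
$$\rank[x,y]_{\widehat{\Pi}}+\rank[y,\hat{1}]_{\widehat{\Pi}}=\rank[x,\hat{1}]_{\widehat{\Pi}}.$$
Applied to the pairs $(x_{i-1},y_i)$ and $(x_i,y_i)$ and subtracted, this converts each summand with $i\ge 2$ into the telescoping form $\rank[x_{i-1},\hat{1}]_{\widehat{\Pi}}-\rank[x_i,\hat{1}]_{\widehat{\Pi}}$. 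For the $i=1$ summand, since purity of $\widehat{\Pi}$ itself is not assumed, only the inequality $\rank[\hat{0},y_1]_{\widehat{\Pi}}+\rank[y_1,\hat{1}]_{\widehat{\Pi}}\le\rank\widehat{\Pi}$ is available; combined with the equality from purity of $[x_1,\hat{1}]_{\widehat{\Pi}}$, this bounds the $i=1$ summand above by $\rank\widehat{\Pi}-\rank[x_1,\hat{1}]_{\widehat{\Pi}}$. Summing all $t$ contributions telescopes everything down to $\rank\widehat{\Pi}-\rank[x_t,\hat{1}]_{\widehat{\Pi}}$, and the trailing term $\rank[x_t,\hat{1}]_{\widehat{\Pi}}$ in $r$ cancels the remainder, leaving $r\le\rank\widehat{\Pi}$.

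No step looks like a real obstacle. The only mild subtlety worth flagging is the asymmetry between the $i=1$ term and the others: the hypothesis concerns intervals $[x,\hat{1}]_{\widehat{\Pi}}$ with $x\in\Pi$ only, so the $\hat{0}$-boundary summand must be estimated rather than computed. Fortunately, this asymmetry exactly matches the structure of $r$, where the extra boundary term $\rank[x_t,\hat{1}]_{\widehat{\Pi}}$ is precisely what is needed to close the telescope.
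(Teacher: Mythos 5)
Your argument is correct. The paper itself gives no proof of this corollary (it is quoted directly from \cite[Corollary 3.10]{M17}), so there is nothing to compare against line by line, but your derivation from Theorem~\ref{levelthm} is the natural one and all the steps check out: the observation that every $x_i$ lies in $\Pi$ (so the purity hypothesis applies to each $[x_i,\hat{1}]_{\widehat{\Pi}}$), the rank additivity $\rank[x,y]_{\widehat{\Pi}}+\rank[y,\hat{1}]_{\widehat{\Pi}}=\rank[x,\hat{1}]_{\widehat{\Pi}}$ for pure upper intervals, the one-sided estimate for the $\hat{0}$-boundary term, and the telescoping that cancels against the trailing $\rank[x_t,\hat{1}]_{\widehat{\Pi}}$.
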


By using those results by Miyazaki, we can prove the following: 
\begin{prop}\label{prop:level} Let $m \leq n$. \\
{\em (i)} The Hibi ring $\kk[\Pi_{m,n}]$ of $\Pi_{m,n}$ is level for any $m,n$. \\
{\em (ii)} The Hibi ring $\kk[\Pi_{m,n}']$ of $\Pi_{m,n}'$ is level if and only if $m=1$ or $m=2$. 
\end{prop}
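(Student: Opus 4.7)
The plan is to invoke Miyazaki's criteria recorded above: Corollary~\ref{levelcor} for the positive statements and Theorem~\ref{levelthm} for the negative direction of~(ii). Throughout I would use the fact that after adjoining the extra relation $p_1\prec p_{m+n}$ to $\Pi_{m,n}$, no further comparabilities arise by transitivity, because $p_1$ is minimal in the first chain of $\Pi_{m,n}$ and $p_{m+n}$ is maximal in the second.

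For~(i), since $\Pi_{m,n}$ is the disjoint union of two chains, each principal filter $[x,\hat{1}]_{\widehat{\Pi}_{m,n}}$ with $x\in\Pi_{m,n}$ is itself a chain, namely the tail of the chain through $x$ with $\hat{1}$ appended. Such chains are trivially pure, so Corollary~\ref{levelcor} yields the levelness of $\kk[\Pi_{m,n}]$.

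For the ``if'' direction of~(ii), when $m=1$ the only filter that uses the new relation is $[p_1,\hat{1}]=\{p_1,p_{n+1},\hat{1}\}$, which is still a chain, and every other filter sits inside the chain $p_2\prec\cdots\prec p_{n+1}\prec\hat{1}$. When $m=2$ the only non-chain filter is the ``diamond'' $[p_1,\hat{1}]=\{p_1,p_2,p_{n+2},\hat{1}\}$, whose two maximal chains $p_1\prec p_2\prec\hat{1}$ and $p_1\prec p_{n+2}\prec\hat{1}$ both have length~$2$. In either case every filter $[x,\hat{1}]$ is pure, so Corollary~\ref{levelcor} again applies.

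For the ``only if'' direction of~(ii), assume $m\geq 3$ and test the length-one sequence $y_1=p_{m+n},\ x_1=p_1$ in Theorem~\ref{levelthm}. Condition~N is satisfied for $t=1$ because $p_1\prec p_{m+n}$ is precisely the extra relation and $p_1\ne\hat{0}$. A short computation inside $\widehat{\Pi}'_{m,n}$ yields
\[
\rank[\hat{0},p_{m+n}]=n,\qquad \rank[p_1,p_{m+n}]=1,\qquad \rank[p_1,\hat{1}]=m,
\]
so that $r(y_1,x_1)=(n-1)+m=m+n-1$, whereas $\rank\widehat{\Pi}'_{m,n}=n+1$ by the hypothesis $m\leq n$. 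Hence $r(y_1,x_1)>\rank\widehat{\Pi}'_{m,n}$ precisely when $m\geq 3$, and Theorem~\ref{levelthm} forces $\kk[\Pi'_{m,n}]$ to be non-level. The main step requiring care is the identity $[p_1,p_{m+n}]_{\widehat{\Pi}'_{m,n}}=\{p_1,p_{m+n}\}$: one must verify that no $p_j$ with $2\leq j\leq m$ lies below $p_{m+n}$ and no $p_{m+j}$ with $1\leq j<n$ lies above $p_1$. Both assertions follow from the transitivity remark at the start, but the argument must spell this out explicitly.
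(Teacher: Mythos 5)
Your proposal is correct and follows essentially the same route as the paper: Corollary~\ref{levelcor} for part (i) and for $\kk[\Pi_{1,n}']$, $\kk[\Pi_{2,n}']$, and Theorem~\ref{levelthm} applied to the sequence $y_1=p_{m+n}$, $x_1=p_1$ with the same rank computation $r(y_1,x_1)=m+n-1>n+1=\rank\widehat{\Pi}'_{m,n}$ for $m\ge 3$. Your explicit verification that $[p_1,p_{m+n}]_{\widehat{\Pi}'_{m,n}}=\{p_1,p_{m+n}\}$ and of purity in the cases $m=1,2$ only spells out details the paper leaves implicit.
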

\begin{proof}
The assertion (i) directly follows from Corollary~\ref{levelcor}. 

Similarly, we see from Corollary~\ref{levelcor} that $\kk[\Pi_{1,n}']$ and $\kk[\Pi_{2,n}']$ are level. 

Let $y_1=p_{m+n}$ and $x_1=p_1$ and take the sequence $y_1,x_1$, which satisfies condition N. 
Then we see that 
$$r(y_1,x_1)=\rank [\hat{0},y_1]_{\widehat{\Pi}} - \rank [x_1,y_1]_{\widehat{\Pi}} + \rank [x_1,\hat{1}]_{\widehat{\Pi}}=n-1+m.$$ 
On the other hand, we have $\rank \widehat{\Pi}=n+1$. If $m \geq 3$, then $m-1>1$, so we have 
$$n+m-1=r(x_1,y_1)>\rank \widehat{\Pi}=n+1.$$ 
Hence, $\kk[\Pi_{m,n}']$ is not level when $m \geq 3$ by Theorem~\ref{levelthm}. Therefore, the assertion (ii) follows. 
\end{proof}
Hence, in the case where $n=2$ or $n=3$ with $r_1=1$, $\kk[K_{r_1,\ldots,r_n}]$ is level if and only if $K_{r_1,\ldots,r_n}$ satisfies (1) or (2) in Theorem~\ref{thm:level}. 

\medskip

Note that $\kk[K_{1,n}]$ is isomorphic to a polynomial ring with $n$ variables. 

Regarding the characterization of almost Gorenstein Hibi rings, see \cite[Introduction]{M18}. 
According to it, we see the following: 
\begin{itemize}
\item Let $m \leq n$. Consider the poset $\Pi_{m,n}$. 
\begin{itemize}
\item We see that $\Pi_{1,n}$ fits into the case of (1) of \cite[Introduction]{M18}. Thus, $\kk[\Pi_{1,n}]$ is almost Gorenstein. 
\item Since $\Pi_{m,m}$ is pure, we know that $\kk[\Pi_{m,m}]$ is Gorenstein, in particular, almost Gorenstein. 
\item If $1<m<n$, then we see from the characterization that $\kk[\Pi_{m,n}]$ is never almost Gorenstein. 
\end{itemize}
\item Let $m \leq n$. Consider the poset $\Pi_{m,n}'$. 
\begin{itemize}
\item We have $\star_{\Pi_{1,n}'}(p_{n+1})=\Pi_{1,n}'$ and $\Pi_{1,n}' \setminus \{p_{n+1}\}$ fits into the case of (1) of \cite[Introduction]{M18}. 
Thus, $\kk[\Pi_{1,n}]$ is almost Gorenstein. 
\item We see that $\Pi_{m,m}'$ fits into the case of (2) (ii) (with $p=0$). Hence, $\kk[\Pi_{m,m}']$ is almost Gorenstein. 
\item If $1<m< n$, then we see from the characterization that $\kk[\Pi_{m,n}']$ is never almost Gorenstein. 
\end{itemize}
\end{itemize}

\bigskip


\section{Proofs of main theorems}\label{sec:proof}

The goal of this section is to complete the proofs of Theorems~\ref{thm:level} and \ref{thm:almGor}. 
As shown in Section~\ref{sec:poset}, the case where $n=2$ or $n=3$ with $r_1=1$ was already done. 
Thus, in principle, we discuss the case where $n=3$ with $r_1 \geq 2$ or $n \geq 4$. 

\subsection{Preliminaries for edge polytopes}
Before proving the assertions, we recall some geometric information on edge polytopes.

For $i\in [d]$, let $p_i:\RR^d \to \RR$ be the $i$-th projection, and let $p_{V_k}:=\sum_{j\in V_k}p_j$ for $k\in [n]$.

For $k\in [n]$, let $\displaystyle f_k:=\sum_{i\in [d]\setminus V_k}\eb_i - \sum_{j\in V_k} \eb_j.$ 
We define $\Psi_r$, $\Psi_f$ and $\Psi$ as follows:
$$\Psi_r:=\{\eb_i:i\in [d]\}, \quad \Psi_f:=\{f_k:k\in [n]\}, \quad \Psi:=\Psi_r\cup \Psi_f.$$
Note that for $\eb_i \in \Psi_r$ (resp. $f_k\in \Psi_f$), we have $\langle -,\eb_i \rangle=p_i(-)$ 
(resp. $\langle -,f_k \rangle=\displaystyle \biggl( \sum_{j\in [n]\setminus \{k\}}p_{V_j} - p_{V_k}\biggr)(-)$), 
where $\langle -, - \rangle : \RR^d \times \RR^d \rightarrow \RR$ stands for the usual inner product. 

It is proved in \cite{OH98} that the hyperplanes $H_l:=\{x\in \RR^d:\langle x,l \rangle = 0\}$ for $l\in \Psi$ define the facets of $P_{K_{r_1,\ldots,r_n}}$. 
Although those are not necessarily one-to-one, in the case where $n = 3$ with $r_1\ge 2$ or $n\ge 4$, we see that it is one-to-one.

Let $G$ be a graph on the vertex set $[d]$. We say that $G$ satisfies \textit{odd cycle condition} 
if for any two distinct odd cycles which have no common vertex, there is a bridge between them. 
It is known that $\kk[G]$ is normal if and only if $G$ satisfies odd cycle condition. 
Note that $K_{r_1,\ldots,r_n}$ satisfies odd cycle condition. 
Let $(h_0,h_1,\ldots,h_s)$ be the $h$-vector of the edge ring $\kk[G]$, where $s$ is the scole degree, 
and let $\ell=\min\{m\in \ZZ_{>0}:mP_G^{\circ}\cap \ZZ^d\ne \emptyset\}$, 
where $P_G^{\circ}$ denotes the relative interior of $P_G$. 
Since the ideal generated by the monomials contained in the interior of $P_G$ is the canonical module of $\kk[G]$, 
we see that $\ell=-a(\kk[G])$. Thus, $d=\ell+s$ holds if $G$ is non-bipartite. 
In the case where $n = 3$ with $r_1\ge 2$ or $n\ge 4$, one has $\iota \in mP_{K_{r_1,\ldots,r_n}}^{\circ}\cap \ZZ^d$ 
if and only if $\langle \iota,l \rangle>0$ holds for all $l\in \Psi$.

For $\iota\in (\ell+k)P_G^\circ \cap \ZZ^d$ for $k\in \ZZ_{>0}$, 
then we say that $\iota$ is a \textit{first appearing interior point} in $(\ell+k)P_G^\circ \cap \ZZ^d$ 
if it cannot be written as a sum of $\iota'\in (\ell+i)P_G^\circ \cap \ZZ^d$ with $0\le i<k$ and $\rho(e)$'s with $e\in E(G)$, 
where the elements in $\ell P_G^\circ \cap \ZZ^d$ are regarded as first appearing interior points. 
Let $\mu_k(G)$ denote the number of first appearing interior points in $(\ell+k)P_G^\circ \cap \ZZ^d$ for $k \in \ZZ_{\ge 0}$. 
Note that $\mu_0(G)=h_s$ holds. 

\medskip


In what follows, for the study of $\kk[K_{r_1,\ldots,r_n}]$ with $1 \leq r_1 \leq \cdots \leq r_n$ and $d=\sum_{i=1}^dr_i$, 
we divide into the following three cases on $K_{r_1,\ldots,r_n}$: 
\begin{itemize}
\item[(A)] $2r_n<d$ and $d$ is even; 
\item[(B)] $2r_n<d$ and $d$ is odd; 
\item[(C)] $2r_n\ge d$. 
\end{itemize}


Given a graph $G$ with the edge set $E(G)$, we say that $\calM \subset E(G)$ is a \textit{perfect matching} (a.k.a. \textit{$1$-factor}) 
if every vertex of $G$ is incident to exactly one edge of $\calM$. 
\begin{lem}\label{matching}
The complete multipartite graph $K_{r_1,\ldots,r_n}$ has a perfect matching if and only if $d$ is even and $2r_n\le d$. 
\end{lem}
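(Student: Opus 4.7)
The plan is to handle the two implications separately. Necessity is a short counting argument, and sufficiency will follow by induction on $d$.

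For necessity, suppose $K_{r_1,\ldots,r_n}$ admits a perfect matching $\calM$. Then $d$ must be even. Since $V_n$ is an independent set in $K_{r_1,\ldots,r_n}$, each edge of $\calM$ contains at most one vertex of $V_n$. Thus the $r_n$ edges of $\calM$ meeting $V_n$ have their other endpoints among the $d-r_n$ vertices outside $V_n$, and these endpoints are distinct because $\calM$ is a matching. This forces $r_n \le d-r_n$, i.e.\ $2r_n \le d$.

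For sufficiency, assume $d$ is even and $2r_n \le d$, and induct on $d$. The base $d=0$ is trivial. For the inductive step I would split into two sub-cases. If $2r_n = d$, then $V_n$ and $V_1 \sqcup \cdots \sqcup V_{n-1}$ both have $d/2$ vertices, so the bipartite subgraph of $K_{r_1,\ldots,r_n}$ induced on this bipartition is the complete bipartite graph $K_{d/2,d/2}$, which obviously admits a perfect matching. If $2r_n < d$, one first checks that $n \ge 3$, since for $n \le 2$ the hypotheses $1 \le r_1 \le r_n$ and $2r_n \le d$ force $2r_n = d$. Then pick any $u \in V_n$ and $v \in V_{n-1}$, add the edge $\{u,v\}$ to the matching being built, and delete $u$ and $v$ to obtain a complete multipartite graph $G'$ on $d-2$ vertices, whose part sizes are (up to reordering) $r_1,\ldots,r_{n-2},r_{n-1}-1,r_n-1$.

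The key verification is that $G'$ still satisfies the hypothesis, so that the induction hypothesis applies. The new largest part size is $r_n' = \max(r_{n-2},\, r_n-1)$. If $r_{n-2} \le r_n-1$, then $r_n' = r_n-1$ and $2r_n \le d$ gives $2r_n' \le d-2$ directly. If instead $r_{n-2}=r_n$ (which forces $r_{n-2}=r_{n-1}=r_n$), then $r_n' = r_n$, and combining the \emph{strict} inequality $2r_n < d$ with the fact that both $2r_n$ and $d$ are even yields $2r_n \le d-2$ as required. This parity step is the only mild subtlety worth spelling out; the rest is bookkeeping.
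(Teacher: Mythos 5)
Your proof is correct, but it proceeds quite differently from the paper's. The paper invokes Tutte's $1$-factor theorem and verifies the condition $q(K_{r_1,\ldots,r_n}-U)\le |U|$ for every $U\subset V(K_{r_1,\ldots,r_n})$, splitting into cases according to whether the complement of $U$ meets one part or several; the whole lemma then falls out of one case analysis that treats both directions at once. You instead separate the two implications: your necessity argument is a direct count (each matching edge meets the independent set $V_n$ at most once, so the $r_n$ partners of $V_n$ must fit among the other $d-r_n$ vertices), and your sufficiency argument is a greedy induction on $d$ that actually constructs a perfect matching, reducing to $K_{d/2,d/2}$ when $2r_n=d$ and otherwise matching a vertex of $V_n$ with one of $V_{n-1}$ and checking that the hypothesis is preserved. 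The key verifications are all in order: $n\ge 3$ in the strict case, the new maximal part size is $\max(r_{n-2},r_n-1)$, and the parity argument $2r_n<d$ with both sides even gives $2r_n\le d-2$. Your route is more elementary and self-contained (no black-box matching theorem) and constructive, at the cost of being somewhat longer; the paper's route is shorter modulo Tutte's theorem and handles the equivalence in one sweep. Either proof is acceptable.
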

\begin{proof}
Tutte's theorem (see, e.g., \cite[Theorem 2.2.1]{Diestel}) claims that a graph $G$ on the vertex set $V(G)$ has a perfect matching 
if and only if $q(G - U) \leq |U|$ holds for any $U \subset V(G)$, 
where $q( \cdot )$ denotes the number of connected components with odd cardinality and 
$G - U$ denotes the induced subgraph of $G$ by $V(G) \setminus U$. 

When $U=V(K_{r_1,\ldots,r_n})$, the inequality trivially holds. 
When $U=\emptyset$, we can see that $q(G) = 0$ holds if and only if $d$ is even. 

Consider $\emptyset \neq U \subsetneq V(K_{r_1,\ldots,r_n})$. 
If there are two verticies $u,v$ in $V(K_{r_1,\ldots,r_n})\setminus U$ with $u \in V_i$ and $v \in V_j$ with $i \neq j$, 
the number of connected components of $K_{r_1,\ldots,r_n}-U$ is equal to $1$, so $q(K_{r_1,\ldots,r_n}-U)\le |U|$ holds. 
If there exists $k \in [n]$ with $V(K_{r_1,\ldots,r_n})\setminus U \subset V_k$, 
then it follows from $1\le r_1\le \cdots \le r_n$ and $V_i \subset U$ for $i\in [n]\setminus \{k\}$ that 
$q(K_{r_1,\ldots,r_n}-U)\le r_k \le r_n \le |U|$. Therefore, by considering the case $k=n$, we conclude the following: 
\begin{align*}
q(K_{r_1,\ldots,r_n}-U)\le |U|\ \text{for any }U &\Longleftrightarrow \ r_n\le \sum_{k\in [n-1]}r_k \text{ and $d$ is even } \\
&\Longleftrightarrow \ 2r_n\le d\text{ and $d$ is even.}
\end{align*}
\end{proof}


\begin{prop}\label{prop}Let $(h_0,h_1,\ldots,h_s)$ be the $h$-vector of $\kk[K_{r_1,\ldots,r_n}]$.
\begin{itemize}
\item[(a)] In the case of (A), we have $\ell=d/2$ and $h_s=1$.
\item[(b)] In the case of (B), we have $\ell=(d+1)/2$ and $h_s\ge 2$.
\item[(c)] In the case of (C), we have $\ell=r_n+1$ and $h_s\ge 2$.
\end{itemize}
\end{prop}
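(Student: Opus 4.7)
The plan is to exploit the facet description recalled earlier: under the standing hypothesis ($n=3$ with $r_1\ge 2$, or $n\ge 4$), a lattice vector $\iota$ lies in $mP_{K_{r_1,\dots,r_n}}^\circ\cap\ZZ^d$ if and only if $\iota_i\ge 1$ for all $i\in[d]$, $\sum_i\iota_i=2m$, and $p_{V_k}(\iota)\le m-1$ for every $k\in[n]$. Hence $\ell$ is the least $m$ for which this set is nonempty and, as every such point is first appearing by definition, $h_s=\mu_0(G)$ equals the cardinality of this set at $m=\ell$. Two universal lower bounds are visible at once: summing $\iota_i\ge 1$ gives $\ell\ge \lceil d/2\rceil$, and $p_{V_n}(\iota)\ge r_n$ combined with $p_{V_n}(\iota)\le \ell-1$ gives $\ell\ge r_n+1$.

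For (A), the all-ones vector $\mathbf{1}$ realises level $d/2$ because $p_{V_k}(\mathbf{1})=r_k\le r_n<d/2$; and $\mathbf{1}$ is the only integer vector with entries $\ge 1$ summing to $d$. Thus $\ell=d/2$ and $h_s=1$.

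For (B), $\ell\ge (d+1)/2$, and I saturate it with $\iota=\mathbf{1}+\eb_j$ for $j\in V_{k_0}$, which is admissible iff $r_{k_0}+1\le (d-1)/2$. If $2r_n\le d-3$ any $j$ works; if $2r_n=d-1$ I instead pick $k_0$ with $r_{k_0}<r_n$. Such $k_0$ exists under the standing hypothesis because $r_{n-1}=r_n$ together with $2r_n=d-1$ would force $\sum_{k\le n-2}r_k=1$, impossible when $n=3, r_1\ge 2$ or $n\ge 4$. Different admissible $j$ yield distinct interior lattice points, and the count is $d$ or $d-r_n=(d+1)/2$, both $\ge 2$, so $\ell=(d+1)/2$ and $h_s\ge 2$.

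For (C), at level $r_n+1$ the inequalities $r_n\le p_{V_n}(\iota)\le r_n$ force $\iota|_{V_n}=\mathbf{1}$, and the remaining coordinates must distribute $e:=2r_n+2-d\ge 2$ extras among $V_1,\dots,V_{n-1}$ with at most $r_n-r_k$ extras placed on $V_k$. Feasibility amounts to $\sum_{k<n}(r_n-r_k)=nr_n-d\ge e$, i.e.\ $(n-2)r_n\ge 2$, which holds under the standing hypothesis; hence $\ell=r_n+1$. To secure $h_s\ge 2$ I exhibit two distinct admissible distributions — all $e$ extras on a single vertex of $V_1$, versus on a single vertex of $V_2$ — the admissibility conditions $e\le r_n-r_1$ and $e\le r_n-r_2$ reducing respectively to $r_2+\cdots+r_{n-1}\ge 2$ and $r_1+r_3+\cdots+r_{n-1}\ge 2$, both again ensured by the hypothesis. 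The main obstacle lies precisely in these borderline numerical verifications: eliminating $r_{n-1}=r_n$ when $2r_n=d-1$ in (B), and securing $(n-2)r_n\ge 2$ in (C); both are exactly where the standing hypothesis $n=3,r_1\ge 2$ or $n\ge 4$ is used essentially.
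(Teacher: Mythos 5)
Your argument is correct, and it reaches the same witness points as the paper ($\sum_{i\in[d]}\eb_i$ in case (A), $\sum_i\eb_i+\eb_j$ in case (B), and a vector with all its excess concentrated on one or two vertices outside $V_n$ in case (C)), but it certifies them differently. The paper shows that each witness actually lies in $mP_{K_{r_1,\ldots,r_n}}$ by exhibiting it as $\rho(\calE)$ for an explicit multiset of $m$ edges --- which is why it needs Lemma~\ref{matching} (Tutte's theorem) to produce a perfect matching in cases (A) and (B) --- and then checks the facet inequalities only to verify interiority. You instead take the inequality description $mP^\circ\cap\ZZ^d=\{\iota\in\ZZ^d: \sum_i\iota_i=2m,\ \iota_i\ge 1,\ p_{V_k}(\iota)\le m-1\}$ as the starting point and treat everything as a small integer feasibility problem; this dispenses with the perfect-matching lemma entirely and makes the lower bounds $\ell\ge\lceil d/2\rceil$ and $\ell\ge r_n+1$ completely transparent, at the price of leaning on the full facet description from \cite{OH98} for membership as well as interiority (which the paper has recalled, so this is legitimate, provided one remembers the implicit degree condition $\sum_i\iota_i=2m$, as you do). One point worth making explicit either way: both proofs genuinely use the standing hypothesis ``$n=3$ with $r_1\ge 2$ or $n\ge 4$'' --- part (b) fails for $K_{1,1,1}$, where $\ell=3\ne(d+1)/2$ --- and your borderline checks (ruling out $r_{n-1}=r_n$ when $2r_n=d-1$, and verifying $(n-2)r_n\ge 2$ and $e\le r_n-r_1$, $e\le r_n-r_2$) are exactly the places where the paper's corresponding strict inequalities for $\langle\,\cdot\,,f_1\rangle$ are checked, so the two proofs break down on the same excluded cases.
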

\begin{proof}
In the case of (A), by Lemma~\ref{matching}, 
there exists a perfect matching $\calM \subset E(K_{r_1,\ldots,r_n})$, and we obtain $\rho(\calM):=\sum_{e\in \calM}\rho(e)=\sum_{i\in [d]}\eb_i$. 
We can see that $\rho(\calM)$ is the unique element in $|\calM|P_{K_{r_1,\ldots,r_n}}^\circ \cap \ZZ^d$ 
since $\langle \rho(\calM),l \rangle>0$ hold for all $l\in \Psi$, 
and it is clear that $mP_{K_{r_1,\ldots,r_n}}^\circ \cap \ZZ^d=\emptyset$ for $m<|\calM|$. Therefore, we have $\ell=|\calM|=d/2$ and $h_s=1$.

Next, assume the case of (B). We consider the induced subgraph $K_{r_1,\ldots,r_n}-\{v_n\}$ for $v_n\in V_n$. 
From the assumption, we observe that $d-1$ is even and $2\mathrm{min}\{r_{n-1},r_n-1\}\le 2r_n\le d-1$ holds. 
Hence, we can take a perfect matching $\calM'$ of $K_{r_1,\ldots,r_n}-\{v_n\}$ by Lemma~\ref{matching}. 
Take $v_1\in V_1$, add the edge $\{v_1,v_n\}$ to $\calM'$ and write $\calM''$ for it. 
Then we have $\rho(\calM'')=2\eb_{v_1}+\sum_{i\in [d]\setminus \{v_1\}}\eb_i$, and $\langle \rho(\calM''),l \rangle>0$ for all $l\in \Psi$. 
In fact, for $f_1$, we observe that 
$$\langle \rho(\calM''),f_1 \rangle=\biggl( \sum_{i\in [n]\setminus \{1\}}p_{V_i} - p_{V_1}\biggr) (\rho(\calM''))=\displaystyle \sum_{k\in [n]\setminus \{1\}}r_k-(r_1+1)\ge \sum_{k\in [n]\setminus \{1,2\}}r_k-1>0.$$
Thus, we have $\rho(\calM'')\in |\calM''|P_{K_{r_1,\ldots,r_n}}^\circ \cap \ZZ^d$ and $mP_{K_{r_1,\ldots,r_n}}^\circ \cap \ZZ^d=\emptyset$ for $m<|\calM''|$. 
Moreover, by exchanging $v_1$ with another vertex of $V_1$ or a vertex of $V_2$ if $r_1=1$, 
we obtain $\rho(\calM'')\in |\calM''|P_{K_{r_1,\ldots,r_n}}^\circ \cap \ZZ^d$ in the same way. 
Therefore, we have $\ell=|\calM''|=(d+1)/2$ and $h_s\ge 2$.

Finally, assume the case of (C). Let $r_n':=\sum_{i\in [n-1]}r_i$ and let $r_n'':=r_n-r_n'$. 
We join $r_n'$ verticies of $V_n$ to verticies of $[d]\setminus V_n$ one-by-one, and join the remaining $r_n''$ verticies of $V_n$ to $v_1$, 
and join a vertex $v_2\in V_2$ to $v_1$. Let $\calE$ be the set of those edges. 
Then we have $\rho(\calE)=(r_n''+2)\eb_{v_1}+2\eb_{v_2}+\sum_{i\in [d]\setminus \{v_1,v_2\}}\eb_i$, and $\langle \rho(\calE),l \rangle>0$ for all $l\in \Psi$. 
In fact, for $f_1$, we observe that 
$$\langle \rho(\calE),f_1 \rangle=\displaystyle \sum_{k\in [n]\setminus \{1\}}r_k+1-(r_1+r_n''+1)\ge \sum_{k\in [n]\setminus \{1,2,n\}}r_k+r_n'>0.$$
Thus, we have $\rho(\calE)\in |\calE|P_{K_{r_1,\ldots,r_n}}^\circ \cap \ZZ^d$ and $mP_{K_{r_1,\ldots,r_n}}^\circ \cap \ZZ^d=\emptyset$ for $m<|\calE|$ 
since $\langle \iota,r \rangle>0$ for all $\iota \in \ell P_{K_{r_1,\ldots,r_n}}^\circ \cap \ZZ^d$ and $r\in \Psi$. 
Moreover, by exchanging $v_2$ with another vertex of $V_2$ or a vertex of $V_3$ if $r_2=1$, 
we obtain $\rho(\calE)\in |\calE|P_{K_{r_1,\ldots,r_n}}^\circ \cap \ZZ^d$ in the same way. Therefore, we have $\ell=|\calE|=r_n+1$ and $h_s\ge 2$.
\end{proof}

\begin{cor}\label{cor}
Assume that $n = 3$ with $r_1 \geq 2$ or $n \geq 4$. 
If $\kk[K_{r_1,\ldots,r_n}]$ is almost Gorenstein, then $K_{r_1,\ldots,r_n}$ is in the case of (A). 
\end{cor}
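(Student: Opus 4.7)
The strategy is to argue by contraposition: assume we are in case (B) or case (C) and deduce that $\kk[K_{r_1,\ldots,r_n}]$ cannot be almost Gorenstein. The key tool is Theorem~\ref{thm:h_s=1}, which says that for an almost Gorenstein homogeneous domain with socle degree $s \geq 2$, one must have $h_s = 1$. Since $\kk[K_{r_1,\ldots,r_n}]$ is a toric ring (hence a homogeneous domain) and Cohen--Macaulay (by normality, which follows from the odd cycle condition), Theorem~\ref{thm:h_s=1} is directly applicable once we verify $s \geq 2$.

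First I would record that in cases (B) and (C) we already have $h_s \geq 2$ from Proposition~\ref{prop}, so the only remaining task is to check that the socle degree $s$ is at least $2$. Using the identity $d = \ell + s$ together with the values of $\ell$ from Proposition~\ref{prop}, I compute $s = (d-1)/2$ in case (B) and $s = d - r_n - 1$ in case (C).

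Next I would verify $s \geq 2$ under the hypothesis that $n = 3$ with $r_1 \geq 2$ or $n \geq 4$. In case (B), $d$ is odd, and the minimal examples satisfying our hypothesis already give $d \geq 5$, so $s \geq 2$. In case (C), $s \geq 2$ amounts to $\sum_{i=1}^{n-1} r_i \geq 3$, which again holds automatically: for $n = 3$ with $r_1 \geq 2$ we have $r_1 + r_2 \geq 4$, and for $n \geq 4$ we have $\sum_{i=1}^{n-1} r_i \geq n-1 \geq 3$.

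Combining these steps, if we are in case (B) or (C) under the stated hypothesis, then $s \geq 2$ and $h_s \geq 2$, so by Theorem~\ref{thm:h_s=1} the ring fails to be almost Gorenstein. Taking the contrapositive gives the corollary. I do not expect any serious obstacle here: the argument is essentially bookkeeping, and the only mildly technical point is checking the lower bound $s \geq 2$ in the small cases, which is an immediate numerical verification.
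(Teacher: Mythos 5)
Your argument is correct and is essentially identical to the paper's own proof: both compute $s=d-\ell$ from Proposition~\ref{prop} in cases (B) and (C), check $s\ge 2$ under the hypothesis, and invoke Theorem~\ref{thm:h_s=1} together with $h_s\ge 2$ to rule out almost Gorensteinness. No issues.
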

\begin{proof}
In our assumption, we have $d \geq 4$. Note that $s=d-\ell$. 

In the case of (B), since $s=d-\ell=d-(d+1)/2=(d-1)/2$ by Proposition~\ref{prop} (b), and $d$ is odd, we see that $s \geq 2$. 
In the case of (C), since $s=d-\ell=\sum_{i \in [n]}r_i -r_n-1=\sum_{i \in [n-1]}r_i -1$ by Proposition~\ref{prop} (c), we see that $s \geq 2$. 
Moreover, in both cases, we also know that $h_s \geq 2$. Therefore, these are never almost Gorenstein by Theorem~\ref{thm:h_s=1}. 
\end{proof}

\medskip

\subsection{Proof of Theorem~\ref{thm:level}}

This subsection is devoted to proving Theorem~\ref{thm:level}. 
Since the case where $n=2$ or $n = 3$ with $r_1=1$ has been already done in Section~\ref{sec:poset}, 
we assume that $n =3$ with $r_1 \geq 2$ or $n \geq 4$. 
Under this assumption, we prove that $\kk[K_{r_1,\ldots,r_n}]$ is level if and only if one of the following holds: 
\begin{equation}\label{condition:level}
\begin{split}
&\text{$n=3$ with $r_1=r_2=2$ or $r_1=r_2=r_3=3$;} \\
&\text{$n=4$ with $r_1=r_2=r_3=1$;} \\
&\text{$n=5$ with $r_1=\cdots=r_5=1$}. 
\end{split}
\end{equation}

Assume the case of (A). 
By Proposition~\ref{prop}, we have $h_s=1$. Thus, $\kk[K_{r_1,\ldots,r_n}]$ is Gorenstein if it is level (see Remark~\ref{rem:level_Gor}). 
Hence, $\kk[K_{r_1,\ldots,r_n}]$ is level if and only if $K_{r_1,\ldots,r_n}=K_{2,2,2}$ or $K_{1,1,1,1}$ by \cite[Remark 2.8]{OH00}. 

Therefore, in what follows, we consider the cases (B) and (C). 

\medskip

\noindent
{\bf ``Only if'' part}: 

Assume the case of (B). Take $\calM''$ and $v_1\in V_1$ as in the proof of Proposition~\ref{prop}. 
Then there exsists an edge $\{i,j\}\in \calM''$ such that $i \not\in V_1$ and $j \not\in V_1$. 
Remove such edge from $\calM''$ and add $\{v_1,i\}$ and $\{v_1,j\}$ to $\calM''$. 
Write $\calN$ for it. Then we have $$\rho(\calN)=4\eb_{v_1}+\sum_{i\in [d]\setminus \{v_1\}}\eb_i \in (\ell+1)P_{K_{r_1,\ldots,r_n}}\cap \ZZ^d.$$ 
Since there is only one entry which is more than $1$, we see that $\rho(\calN)$ cannot be written 
as a sum of $\ell P_{K_{r_1,\ldots,r_n}}^\circ \cap \ZZ^d$ and $\rho(e)$ for $e\in E(K_{r_1,\ldots,r_n})$. 
Hence, once we have $\rho(\calN)\in (\ell+1)P_{K_{r_1,\ldots,r_n}}^\circ \cap \ZZ^d$, it is not level. 
Since we know $\langle \rho(\calN),r \rangle>0$ for all $r\in \Psi_r$, we may observe those of $\Psi_f$: 
\begin{align}
&\langle f_1,\rho(\calN)\rangle =\sum_{i\in [n]\setminus \{1\}}r_i-(r_1+3)>0 \label{level(B)f_1};  \\
&\langle f_k,\rho(\calN)\rangle =\sum_{i\in [n]\setminus \{k\}}r_i+3-r_k>0\ \text{for}\ k\in [n]\setminus \{1\}.\label{level(B)f_k} 
\end{align}
The inequality \eqref{level(B)f_k} always holds by the assumption (B). The inequality \eqref{level(B)f_1} holds if 
\begin{align*}
n &\ge 6,  \\
n &=5\ \text{with}\ r_5\ge 2,  \\
n &=4\ \text{with}\ r_4\ge 3, \text{ or} \\
n &=3\ \text{with}\ r_3\ge 4. 
\end{align*} 
Therefore, in the case of (B), 
\begin{align*}
\text{$\kk[K_{r_1,\ldots,r_n}]$ are not level except for $K_{1,1,1,1,1}$, $K_{1,1,1,2}$, $K_{2,2,3}$, and $K_{3,3,3}$.} 
\end{align*} 
Note that we can confirm that $\kk[K_{1,2,2,2}]$ is not level by using {\tt Macaulay2} (\cite{M2}). 

\medskip

Assume the case of (C). Take $\calE$, $v_1\in V_1$, and $v_2\in V_2$ as in the proof of Proposition~\ref{prop}. 
In $\calE$, let $v_n\in V_n$ be the vertex adjacent to $v_2$, 
and let $v_n'$ be the vertex adjacent to a vertex $v_2'\ne v_2$ of $V_2$ or a vertex $v_3\in V_3$ if $r_2=1$. 
Remove $\{v_2,v_n\}$ and $\{v_2',v_n'\}$ from $\calE$ and add $\{v_1,v_n\}$, $\{v_1,v_2'\}$ and $\{v_1,v_n'\}$ to $\calE$. 
Write $\calN'$ for it. Then we have 
$$\rho(\calN')=(r_n''+5)\eb_{v_1}+\sum_{i\in [d]\setminus \{v_1\}}\eb_i \in (\ell+1)P_{K_{r_1,\ldots,r_n}}\cap \ZZ^d.$$ 
Then we see that $\rho(\calN')$ cannot be written 
as a sum of $\ell P_{K_{r_1,\ldots,r_n}}^\circ \cap \ZZ^d$ and $\rho(e)$ for $e\in E(K_{r_1,\ldots,r_n})$. 
Hence, once we have $\rho(\calN')\in (\ell+1)P_{K_{r_1,\ldots,r_n}}^\circ \cap \ZZ^d$, it is not level. 
Since we know $\langle \rho(\calN'),r \rangle>0$ for all $r\in \Psi_r$, we may observe those of $\Psi_f$: 
\begin{align}
&\langle f_1,\rho(\calN')\rangle =\sum_{i\in [n]\setminus \{1\}}r_i-(r_1+r_n''+4)>0; \label{level(C)f_1} \\
&\langle f_k,\rho(\calN')\rangle =\sum_{i\in [n]\setminus \{k\}}r_i+r_n''+4-r_k>0\ \text{for}\ k\in [n]\setminus \{1\}.\label{level(C)f_k}
\end{align}
The inequality \eqref{level(C)f_k} always holds by (C). The inequality \eqref{level(C)f_1} holds if 
\begin{align*}
n &\ge 5, \\
n &=4\ \text{with}\ r_3\ge 2, \text{ or} \\
n &=3\ \text{with}\ r_2\ge 3.
\end{align*}
Thus, in the case of (C), 
\begin{align*}
\text{$\kk[K_{r_1,\ldots,r_n}]$ is not level except for $K_{2,2,r_3}$ with $r_3\ge 4$ and $K_{1,1,1,r_4}$ with $r_4\ge 3$.} 
\end{align*}

Therefore, we obtain that $\kk[K_{r_1,\ldots,r_n}]$ is not level if not in the case \eqref{condition:level}. 

\medskip

\noindent
{\bf ``If'' part}: 

Our remaining task is to show that the edge rings of \eqref{condition:level} are level. 

\noindent
($K_{2,2,r_3}$ with $r_3\ge 2$)

If $r_3=2$, $\kk[K_{2,2,2}]$ is Gorenstein, and if $r_3=3$, $\kk[K_{2,2,3}]$ is level by using {\tt Macaulay2}. 

Hence, let us assume that $r_3\ge 4$. Then $K_{r_1,\ldots,r_n}$ satisfies (C). 
Thus, we have $\ell=r_3+1$. It is enough to show that for any $k \geq 0$ and $\iota \in (\ell+k)P_{K_{2,2,r_3}}^\circ \cap \ZZ^d$, 
$\iota$ can be written as a sum of an element of $\ell P_{K_{2,2,r_3}}^\circ \cap \ZZ^d$ and $k$ elements of $P_{K_{2,2,r_3}} \cap \ZZ^d$, 
i.e., $\rho(e_1),\ldots,\rho(e_k)$ with $e_1,\ldots,e_k\in E(K_{2,2,r_3})$. We show this by induction on $k$. The case $k=0$ trivially holds. 

We have $\biggl( \sum_{i\in [d]}p_i\biggr)(\iota)=2(\ell+k)=2r_3+2k+2\ge 2r_3+4$, $p_i(\iota)>0$ for $i\in [d]$, 
$p_{V_1}(\iota)$, $p_{V_2}(\iota)\ge 2$, and $p_{V_3}(\iota)\ge r_3$. 
In the case $p_{V_3}(\iota)=r_3$, we can see that $\langle \iota,f_j\rangle>0$ holds, that is, 
$p_{V_j}(\iota)\ge 3$ for $j=1,2$, and there exsist a $v_1\in V_1$ and a $v_2\in V_2$ such that $p_{v_j}(\iota)\ge 2$ for $j=1,2$. 
Let $\iota':=\iota-\rho(\{v_1,v_2\})$. If $\langle \iota',l \rangle>0$ holds for $l\in \Psi$, 
we have $\iota'\in (\ell+k-1)P_{K_{2,2,r_3}}^\circ \cap \ZZ^d$. It is enough to discuss that of $f_3$: 
$$\langle \iota',f_3\rangle =\biggl(\sum_{k\in \{1,2\}}p_{V_k}\biggr)(\iota)-2-p_{V_3}(\iota)=(r_3+2k+2)-2-r_3>0.$$

In the case $p_{V_3}(\iota)\ge r_3+1$, there exists a $v_3\in V_3$ such that $p_{v_3}(\iota)=2$. 
We may assume that $p_{V_1}(\iota)\le p_{V_2}(\iota)$. Then there is a $v_2'\in V_2$ such that $p_{v_2'}(\iota)\ge 2$. 
Let $\iota':=\iota-\rho(\{v_2',v_3\})$. If we have $\langle \iota,l \rangle>0$ for $l\in \Psi$,
we obtain $\iota'\in (\ell+k-1)P_{K_{2,2,r_3}}^\circ \cap \ZZ^d$. It is enough to discuss that of $f_1$: 
$$\langle \iota',f_1\rangle =\biggl(\sum_{k\in \{2,3\}}p_{V_k}\biggr)(\iota)-2-p_{V_1}(\iota)\ge p_{V_3}(\iota)-2>0.$$

Therefore, we obtain the desired result. 

\smallskip

\noindent
($K_{3,3,3}$)

In the same way as above, it is enough to show that for any $k \geq 0$ and $\iota \in (\ell+k)P_{K_{3,3,3}}^\circ \cap \ZZ^d$, 
$\iota$ can be written as a sum of an element of $\ell P_{K_{3,3,3}}^\circ \cap \ZZ^d$ and $\rho(e_1),\ldots,\rho(e_k)$ with $e_1,\ldots,e_k\in E(K_{3,3,3})$. 

By $\ell=5$, we have $\biggl( \sum_{i\in [d]}p_i\biggr)(\iota)=2(\ell+k)=2k+10\ge 12$. 
We may assume that $p_{V_1}(\iota)\le p_{V_2}(\iota)\le p_{V_3}(\iota)$. 
If we have $p_{V_1}(\iota)=p_{V_2}(\iota)=3$, we obtain $p_{V_3}(\iota)=2k+4\ge 6$ and $\langle \iota',f_3 \rangle \le 0$. This is a contradiction. 
Thus, we have $4\le p_{V_2}(\iota)\le p_{V_3}(\iota)$. Hence, there exsist a $v_2\in V_2$ and $v_3\in V_3$ 
such that $p_{v_j}(\iota)\ge 2$ for $j=1,2$. Let $\iota':=\iota-\rho(\{v_2,v_3\})$. 
If $\langle \iota',l \rangle>0$ holds for all $\l\in \Psi$, we obtain $\iota'\in (\ell+k-1)P_{K_{3,3,3}}^\circ \cap \ZZ^d$. 
It is enough to discuss that of $f_1$: 
$$\langle \iota',f_1\rangle =\biggl(\sum_{k\in \{2,3\}}p_{V_k}\biggr)(\iota)-2-p_{V_1}(\iota)=\Bigl(p_{V_2}-p_{V_1}\Bigr)(\iota)+\Bigl(p_{V_3}(\iota)-2\Bigr)>0.$$ 

Therefore, we obtain the desired result. 

\smallskip

\noindent
($K_{1,1,1,r_4}$ with $r_4 \geq 1$)

We can see that $\kk[K_{1,1,1,1}]$ is Gorenstein, and we can check by {\tt Macaulay2} that $\kk[K_{1,1,1,2}]$ is level. 
For $r_4 \geq 3$, by Proposition~\ref{prop} (c), $K_{1,1,1,r_4}$ is in the case of (C) and $s=d-r_4-1=2$. 
It is always level by Theorem~\ref{thm:Y}. 

\noindent
($\kk[K_{1,1,1,1,1}]$) 

We can check by {\tt Macaulay2} that $\kk[K_{1,1,1,1,1}]$ is level. 

\medskip


\subsection{Proof of Theorem~\ref{thm:almGor}}

We still assume the condition $n=3$ with $r_1\ge 2$ or $n\ge 4$. 
This subsection is devoted to giving a proof of Theorem~\ref{thm:almGor}.

We recall a notion of Ehrhart polynomials. 
Let $P \subset \RR^N$ be an integral convex polytope, which is a convex polytope all of whose vertices belong to $\ZZ^N$. 
For $m \in \ZZ_{>0}$, consider the number of integer points contained in $mP \cap \ZZ^N$. 
Then it is known that such number $|mP\cap \ZZ^N|$ can be described by a polynomial in $m$ of degree $\dim P$, denoted by $i(P,m)$. 
The enumerating polynomial $i(P,m)$ is called the {\em Ehrhart polynomial} of $P$. 
For the introduction to the Ehrhart polynomials, see, e.g., \cite{BR}. 

Throughout the remaining parts of this section, let $R=\kk[K_{r_1,\ldots,r_n}]$. 
Note that $R$ is normal since $K_{r_1,\ldots,r_n}$ satisfies odd cycle condition. 
Regarding the definition of almost Gorensteinness, let $C$ be the cokernel of the injection $R \to \omega_R(-a)$. 
Note that $C$ is a Cohen--Macaulay $R$-module of dimension $d-1$. Our goal is to characterize when $e(C)=\mu(C)$ holds. 
For this, we prepare the following two lemmas. 
\begin{lem}\label{multiplicity}
Assume the case of (A). Then 
\begin{align*}
e(C)=\displaystyle \sum_{k\in [n]} \Bigl(\frac{d}{2}-r_k-1\Bigr)\binom{d-2}{r_k-1}.
\end{align*}
\end{lem}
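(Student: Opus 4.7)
The plan is to compute the Hilbert function $H(C,m)$ for large $m$ and recover $e(C)$ as $(d-2)!$ times its leading coefficient (noting that $C$ is Cohen--Macaulay of dimension $d-1$). Since $R$ is the normal toric ring of $P_G$ with $G=K_{r_1,\ldots,r_n}$, we have $H(R,m)=|mP_G\cap\ZZ^d|$ and $H(\omega_R,m)=|mP_G^\circ\cap\ZZ^d|$. In case (A), Proposition~\ref{prop} gives $a(R)=-d/2$, so the defining exact sequence yields
\[H(C,m)=|(m+d/2)P_G^\circ\cap\ZZ^d|-|mP_G\cap\ZZ^d|.\]
Moreover, the inclusion $R\hookrightarrow\omega_R(-a)$ sends $1$ to the unique interior lattice point $\iota_0:=\sum_{i\in[d]}\eb_i$ of $(d/2)P_G$. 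Hence its image in degree $m$ is the translate $\iota_0+(mP_G\cap\ZZ^d)$; translating the facet inequalities then shows that an interior point $\xi\in(m+d/2)P_G^\circ\cap\ZZ^d$ fails to lie in the image if and only if $\sum_{j\in V_k}\xi_j\ge m+r_k+1$ for some $k\in[n]$.

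Setting $A_k:=\{\xi\in(m+d/2)P_G^\circ\cap\ZZ^d:\ \sum_{j\in V_k}\xi_j\ge m+r_k+1\}$, we thus have $H(C,m)=|\bigcup_k A_k|$. The crucial estimate is the following: for $\xi\in A_k$ with $S:=\sum_{j\in V_k}\xi_j$, interiority gives $S\le m+d/2-1$, while combining $\xi_j\ge 1$ on vertices outside $V_k\cup V_{k'}$ with $\sum_i\xi_i=2m+d$ yields, for any $k'\ne k$,
\[\sum_{j\in V_{k'}}\xi_j\le (2m+d)-S-(d-r_k-r_{k'})\le m+r_{k'}-1,\]
where the last step uses $S\ge m+r_k+1$. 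This single bound simultaneously confirms that the remaining facet constraints $\sum_{j\in V_{k'}}\xi_j\le m+d/2-1$ are automatic for $\xi\in A_k$ and shows $\xi\notin A_{k'}$. Thus the $A_k$ are pairwise disjoint, so $H(C,m)=\sum_k|A_k|$; the assumption $r_k<d/2$ from case (A) is used essentially here.

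Summing over the $d/2-r_k-1$ admissible values of $S$ and factoring the count of positive integer compositions of $S$ on $V_k$ and of $2m+d-S$ on its complement gives
\[|A_k|=\sum_{S=m+r_k+1}^{m+d/2-1}\binom{S-1}{r_k-1}\binom{2m+d-S-1}{d-r_k-1}.\]
As $m\to\infty$, both $S$ and $2m+d-S$ are linear in $m$, so each summand is a polynomial in $m$ of degree $d-2$ with leading coefficient $1/\bigl((r_k-1)!(d-r_k-1)!\bigr)$. Multiplying by the number of summands, then by $(d-2)!$, and summing over $k$ yields
\[e(C)=\sum_{k\in[n]}\Bigl(\frac{d}{2}-r_k-1\Bigr)\binom{d-2}{r_k-1},\]
as required. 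The main obstacle is the disjointness-cum-automatic-facet argument in the middle paragraph, which hinges on careful bookkeeping of strict versus non-strict inequalities (in particular the integer gap between $m+r_k$ and $m+r_k+1$ produced by passing from the closed to the interior polytope); once that estimate is in hand, the remainder is a routine leading-coefficient extraction.
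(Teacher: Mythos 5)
Your proof is correct, but it takes a genuinely different route from the paper's. The paper computes $e(C)$ from the top two coefficients $c_{d-1},c_{d-2}$ of the Ehrhart polynomial, which it reads off from the explicit formula for $i(P_{K_{r_1,\ldots,r_n}},m)$ in \cite[Theorem 2.6]{OH00} via Ehrhart reciprocity, and then needs an induction on $r_k$ with repeated use of the Pascal identity to massage $(d-2)!\bigl((d-1)\ell c_{d-1}-2c_{d-2}\bigr)$ into the stated closed form. You instead interpret $C_m$ directly as the set of lattice points of $(m+d/2)P_G^\circ$ lying outside the translate $\iota_0+(mP_G\cap\ZZ^d)$; your key observation --- that a point violating the translated inequality $\sum_{j\in V_k}\xi_j\le m+r_k$ for one $k$ automatically satisfies $\sum_{j\in V_{k'}}\xi_j\le m+r_{k'}-1$ for every $k'\ne k$ (using $2r_n<d$ from case (A)) --- makes the sets $A_k$ pairwise disjoint and renders the other facet constraints vacuous, so each $|A_k|$ is a clean sum of products of composition counts. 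I checked the details: the characterization of the image via the translated facet inequalities, the disjointness estimate, the count $|A_k|=\sum_{S=m+r_k+1}^{m+d/2-1}\binom{S-1}{r_k-1}\binom{2m+d-S-1}{d-r_k-1}$, and the leading-coefficient extraction are all sound, and degenerate cases ($r_k=d/2-1$ giving an empty range, hence $A_k=\emptyset$) are consistent with the formula. Your argument is self-contained (it does not need the Ohsugi--Hibi Ehrhart formula or the binomial-identity induction), it gives each summand $(d/2-r_k-1)\binom{d-2}{r_k-1}$ a direct combinatorial meaning, and it actually yields an exact expression for $H(C,m)$ rather than only its leading term; the paper's approach, by contrast, outsources the hard enumeration to a known result at the cost of a less transparent algebraic computation.
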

\begin{proof}
Let $i(P_{K_{r_1,\ldots,r_n}},m)=c_{d-1}m^{d-1}+c_{d-2}m^{d-2}+\cdots+1$ be the Ehrhart polynomial of $P_{K_{r_1,\ldots,r_n}}$. 
Since $R$ is normal, we see that $H(R,m)=i(P_{K_{r_1,\ldots,r_n}},m)$. 
Note that $i(P_{K_{r_1,\ldots,r_n}}^{\circ},m)=(-1)^{d-1}i(P_{K_{r_1,\ldots,r_n}},-m)$ holds. (See, e.g., \cite[Theorem 4.1]{BR}.) 
From an exact sequence \eqref{eq:exact}, we can see that the Hilbert function $H(C,m)$ of $C$ coincides with 
$$i(P_{K_{r_1,\ldots,r_n}}^{\circ},m+\ell)-i(P_{K_{r_1,\ldots,r_n}},m),$$ 
where $\ell=-a(R)$. This implies that the leading coefficient of $H(C,m)$ coincides with $(d-1)\ell c_{d-1}-2c_{d-2}$. 
Note that $\dim C=d-1$. Thus, $$e(C)=(d-2)!((d-1)\ell c_{d-1}-2c_{d-2}).$$ 
Here, \cite[Theorem 2.6]{OH00} claims that 
$$i(P_{K_{r_1,\ldots,r_n}},m)=\displaystyle \binom{d+2m-1}{d-1}-\sum_{k\in [n]} \sum_{1\le i\le j\le r_k}\binom{j-i+m-1}{j-i}\binom{d-j+m-1}{d-j}.$$ 
Hence, a direct computation shows that 
\begin{align*}
&(d-1)!c_{d-1}=\displaystyle 2^{d-1}-\sum_{k\in [n]} \sum_{j\in [r_k]}\binom{d-1}{j-1} \;\text{ (see \cite[Corollary 2.7]{OH00}), and} \\
&(d-2)!c_{d-2}=\displaystyle 2^{d-3}d-\sum_{k\in [n]}\sum_{j\in [r_k]}\Biggl( \binom{d-2}{j-2}+\frac{d-2}{2}\biggl( \binom{d-3}{j-3}+\binom{d-3}{j-1}\biggr) \Biggr).
\end{align*}
Remark $\ell=d/2$ by Proposition~\ref{prop} (a). Therefore, we conclude that 
$$e(C)=\displaystyle \sum_{k\in [n]}\sum_{j\in [r_k]}\Biggl( 2\binom{d-2}{j-2}+(d-2)\biggl( \binom{d-3}{j-3}+\binom{d-3}{j-1}\biggr)-\frac{d}{2}\binom{d-1}{j-1} \Biggr),$$
where we set $\binom{n}{r}$ for $r\in \ZZ_{<0}$ to be 0. By using 
\begin{align}\displaystyle \binom{n-1}{r-1}+\binom{n-1}{r}=\binom{n}{r}\ \text{for}\ n,r\in \ZZ, \label{bino}\end{align}
we obtain that 
$$e(C)=\displaystyle \sum_{k\in [n]}\sum_{j\in [r_k]}\Biggl( \frac{d}{2}\binom{d-1}{j-1}-2(d-1)\binom{d-2}{j-1}+2(d-2)\binom{d-3}{j-1}\Biggr).$$

It is enough to prove that
\begin{align}\label{induction}
\displaystyle \sum_{j\in [r_k]}\Biggl( \frac{d}{2}\binom{d-1}{j-1}-2(d-1)\binom{d-2}{j-1}+2(d-2)\binom{d-3}{j-1}\Biggr)=\Bigl(\frac{d}{2}-r_k-1\Bigr)\binom{d-2}{r_k-1}
\end{align}
holds. We prove this by induction on $r_k$. We can directly see that \eqref{induction} holds when $r_k=1$. 

Suppose that $r_k>1$. By the hypothesis of induction, we have 
\begin{align*}
\displaystyle \sum_{j\in [r_k+1]}&\Biggl( \frac{d}{2}\binom{d-1}{j-1}-2(d-1)\binom{d-2}{j-1}+2(d-2)\binom{d-3}{j-1}\Biggr) \\
=&\Bigl(\frac{d}{2}-r_k-1\Bigr)\binom{d-2}{r_k-1}+\Biggl(\frac{d}{2}\binom{d-1}{r_k}-2(d-1)\binom{d-2}{r_k}+2(d-2)\binom{d-3}{r_k}\Biggr). \\
\intertext{By using $\displaystyle (d-r_k-p)\binom{d-p}{r_k}=(d-p)\binom{d-p-1}{r_k}$ for $p=1,2$ and \eqref{bino}, we obtain that}
&\Bigl(\frac{d}{2}-r_k-1\Bigr)\biggl(\binom{d-1}{r_k}-\binom{d-2}{r_k}\biggr) \\
&+ \Biggl(\frac{d}{2}\binom{d-1}{r_k}-2(d-1)\binom{d-2}{r_k}+2(d-r_k-2)\binom{d-2}{r_k}\Biggr) \\
=&\Bigl(d-r_k-1\Bigr)\binom{d-1}{r_k}-\Bigl(\frac{d}{2}+r_k+1\Bigr)\binom{d-2}{r_k} \\
=&\bigl(d-1\bigr)\binom{d-2}{r_k}-\Bigl(\frac{d}{2}+r_k+1\Bigr)\binom{d-2}{r_k}=\Bigl(\frac{d}{2}-r_k-2\Bigr)\binom{d-2}{r_k}.
\end{align*}
This completes the proof. 
\end{proof}

\begin{lem}\label{m.s.o.g}
Assume the case of (A). Then 
$$\mu(C)=\displaystyle \sum_{k\in [n]}\sum_{j\in [\frac{d}{2}-r_k-1]}\binom{r_k-1+2j}{r_k-1},$$
where we let $\displaystyle \sum_{j\in [\frac{d}{2}-r_k-1]}\binom{r_k-1+2j}{r_k-1}=0$ if $\displaystyle r_k=\frac{d}{2}-1$. 
\end{lem}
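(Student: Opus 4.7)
The plan is to identify the minimal generators of $C$ with a combinatorially explicit family of interior lattice points of $\omega_R$ and then enumerate them via a stars-and-bars argument. By Proposition~\ref{prop}(a), $\mathbf{1} := \sum_{i \in [d]} \eb_i$ is the unique lattice point in $\ell P_{K_{r_1,\ldots,r_n}}^\circ \cap \ZZ^d$, so the injection $R \hookrightarrow \omega_R(-a)$ is realized by $1 \mapsto t^{\mathbf{1}}$. Since $R$ is generated in degree $1$, we have $R_+ \omega_R(-a) = R_1 \cdot \omega_R(-a)$, whose degree-$m$ part is the $\kk$-span of the non--first-appearing interior lattice points at level $\ell + m$. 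Moreover, every monomial $t^{\mathbf{1}+\gamma}$ in the image of $R_m$ with $m \geq 1$ factors as $t^{\rho(e)} \cdot t^{\mathbf{1}+\gamma-\rho(e)}$ with $\mathbf{1}+\gamma-\rho(e) \in (\ell+m-1) P_{K_{r_1,\ldots,r_n}}^\circ \cap \ZZ^d$ (since $\mathbf{1}$ is strictly interior in case (A)), so the image of $R_m$ is absorbed into $R_+ \omega_R(-a)$. Consequently $(C/R_+C)_0 = 0$ and, for $m \geq 1$, the degree-$m$ part of $C/R_+C$ is the $\kk$-span of first appearing interior lattice points at level $\ell + m$, giving $\mu(C) = \sum_{j \geq 1} \mu_j(K_{r_1,\ldots,r_n})$.

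The combinatorial heart of the argument is the claim that for $j \geq 1$, the first appearing interior lattice points at level $\ell + j$ are precisely the $\alpha \in \ZZ^d$ for which there exists $k \in [n]$ with $\alpha_i = 1$ for all $i \notin V_k$ (equivalently, $p_{V_k}(\alpha) = r_k + 2j$ by the degree identity). The forward inclusion is quick: the facet inequalities $\langle \alpha, f_l\rangle > 0$ reduce to $d - 2r_l + 2j > 0$ for $l \neq k$ (automatic from $2r_l \leq 2r_n < d$) and $d - 2r_k - 2j > 0$ for $l = k$ (which is exactly $j \leq d/2 - r_k - 1$); and for any edge $e \in E(K_{r_1,\ldots,r_n})$ at least one endpoint lies outside $V_k$, so that coordinate of $\alpha - \rho(e)$ vanishes and $\alpha$ is first appearing.

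For the converse I would argue by contradiction: suppose $\alpha$ has excess in two distinct groups, say $\alpha_i \geq 2$ and $\alpha_{i'} \geq 2$ for some $i \in V_a$, $i' \in V_b$ with $a \neq b$, and aim to choose such a pair so that $\alpha - \rho(\{i,i'\}) \in (\ell+j-1) P_{K_{r_1,\ldots,r_n}}^\circ$. The identity $\langle \alpha, f_l\rangle = d + 2j - 2 p_{V_l}(\alpha)$ shows that each $\langle \alpha, f_l\rangle$ is even in case (A) and hence at least $2$; I call $V_l$ \emph{heavy} when this value equals $2$. A short counting argument using $\sum_l p_{V_l}(\alpha) = d + 2j$ shows that at most two of the groups can be heavy once $d \geq 6$ (the remaining possibility $d = 4$ forces $K_{r_1,\ldots,r_n} = K_{1,1,1,1}$, which is Gorenstein, so both sides of the identity vanish). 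Heavy groups necessarily carry excess, so one may choose $(a,b)$ to contain every heavy index; then for $l \notin \{a,b\}$ we have $\langle \alpha - \rho(\{i,i'\}), f_l\rangle \geq 2$, and for $l \in \{a,b\}$ the inner product is unchanged, so $\alpha - \rho(\{i,i'\})$ lies in the required open polytope, contradicting first appearance.

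Given the characterization, counting is routine: for fixed $(k,j)$, the $\alpha$ of the prescribed form correspond bijectively to compositions of $2j$ into $r_k$ non-negative parts (the choices of $\alpha_i$ for $i \in V_k$), contributing $\binom{r_k-1+2j}{r_k-1}$, and the index $k$ is uniquely determined by $\alpha$ whenever $j \geq 1$ (since the $V_k$'s are disjoint), so no overcounting occurs. Summing over $k \in [n]$ and $j \in [d/2 - r_k - 1]$, with the empty-sum convention at $r_k = d/2 - 1$, yields the claimed formula. The main obstacle I anticipate is the converse step: several facet inequalities may be simultaneously near-saturated, and bounding the number of heavy groups so as to pick a valid edge to subtract crucially uses the parity of $\langle \alpha, f_l\rangle$ that case (A) affords.
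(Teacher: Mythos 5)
Your proposal is correct and follows the same overall strategy as the paper's: reduce $\mu(C)$ to counting first appearing interior points via $\mu(C)=\sum_{j\ge 1}\mu_j(K_{r_1,\ldots,r_n})$, characterize those points as the lattice points all of whose coordinates outside a single part $V_k$ equal $1$ (with the constraint $1\le j\le d/2-r_k-1$ coming from the facet $f_k$), and count by stars and bars. The one place where you genuinely diverge is the converse direction of that characterization, which the paper isolates as Lemma~\ref{interior}. There the paper argues by induction on $j$: it orders the parts by their excesses $r_k'=p_{V_k}(\iota)-r_k$, subtracts the edge joining suitable vertices of the two parts of largest excess, and verifies the one surviving facet inequality by a case analysis on which part carries the maximal coordinate sum. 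You instead exploit the parity of $\langle \alpha,f_l\rangle=d+2j-2p_{V_l}(\alpha)$, which is even in case (A) and hence at least $2$ at any interior point; calling a part \emph{heavy} when this value equals $2$, you show heavy parts necessarily carry excess, bound the number of heavy parts by two (for $d\ge 6$) using the total-mass identity $\sum_l p_{V_l}(\alpha)=d+2j$, and then choose the edge to subtract so that its two endpoints lie in excess-carrying parts covering every heavy index; the inner products against $f_a,f_b$ are unchanged and all others drop by exactly $2$ from a value that is at least $4$. This avoids both the induction and the case analysis, at the cost of treating $d=4$ (i.e.\ $K_{1,1,1,1}$, where both sides vanish since the ring is Gorenstein and $\mu(C)=r(R)-1=0$) separately. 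Both arguments are sound; yours makes more transparent exactly where the evenness of $d$ in case (A) is used.
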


\begin{proof}
Remark $\ell=d/2$. 

Since $\mu(C)=\sum_{j\ge \ell}\mu_j(\omega_{K_{r_1,\ldots,r_n}})-1=\sum_{j\ge \ell+1}\mu_j(\omega_{K_{r_1,\ldots,r_n}})=\sum_{j \geq 1}\mu_j(K_{r_1,\ldots,r_n})$, 
where we recall that $\mu_j(G)$ is the number of first appearing interior points. We compute $\mu_j(K_{r_1,\ldots,r_n})$ for $j\ge 1$. 

From Lemma~\ref{interior} below, we see that $\iota\in (\ell+j)P_{K_{r_1,\ldots,r_n}}\cap \ZZ^d$ ($j\ge 1$) is a first appearing interior point 
if and only if there exists $k\in [n]$ such that $\iota$ satisfies 
$$\begin{cases}
p_{V_j}(\iota)=r_k+2j, \\
p_i(\iota)=1\ \text{for}\ i\in [d]\setminus V_k, \text{ and }\\
\langle \iota,f_k \rangle=(d-r_k)-(r_k+2j)>0, \;\text{that is},\; 1\le j \le d/2-r_k-1.
\end{cases}$$
Hence, for $j$ and $k$ respectively, we observe that the number of first appearing interior points is $\displaystyle \binom{r_k-1+2j}{r_k-1}$, 
and so $\mu(C)=\displaystyle \sum_{j\ge 1}\mu_j(K_{r_1,\ldots,r_n})=\sum_{j\ge 1}\sum_{k\in [n]}\binom{r_k-1+2j}{r_k-1}.$ 
\end{proof}

\begin{lem}\label{interior}
Assume the case of (A). Given $\iota\in (\ell+j)P_{K_{r_1,\ldots,r_n}}^\circ \cap \ZZ^d$ for each $j\ge 0$, 
$\iota$ is a first appearing interior point if and only if there exists $k\in [n]$ such that 
\begin{align}\label{cases}
\begin{cases}
p_{V_j}(\iota)=r_k+2j, \\
p_i(\iota)=1\ \text{for}\ i\in [d]\setminus V_k.
\end{cases}
\end{align}
\end{lem}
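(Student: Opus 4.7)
The plan is to use the following equivalence, which I would establish at the outset: $\iota\in(\ell+j)P^\circ\cap\ZZ^d$ is a first appearing interior point if and only if no single edge $e\in E(K_{r_1,\ldots,r_n})$ satisfies $\iota-\rho(e)\in(\ell+j-1)P^\circ\cap\ZZ^d$, where $P=P_{K_{r_1,\ldots,r_n}}$. One direction is immediate from the definition. The other direction follows by peeling off one edge at a time from any expression $\iota=\iota'+\sum_a\rho(e_a)$ with $\iota'$ interior at some lower dilation, using the standard convex-geometric inclusion $mP^\circ+P\subseteq(m+1)P^\circ$.

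For sufficiency, suppose $\iota$ satisfies \eqref{cases} for some $k$. Since $V_k$ is an independent set in $K_{r_1,\ldots,r_n}$, every edge has at least one endpoint $v\in[d]\setminus V_k$; then $p_v(\iota-\rho(e))=p_v(\iota)-1=0$, so $\iota-\rho(e)$ lies on the facet $H_{\eb_v}$ and not in the interior. By the equivalence, $\iota$ is first appearing.

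For necessity I argue the contrapositive. The case $j=0$ is handled by Proposition~\ref{prop}(a), which gives $h_s=1$ and so $\ell P^\circ\cap\ZZ^d=\{(1,\ldots,1)\}$; this unique point already satisfies \eqref{cases}, so the statement is vacuous. For $j\geq 1$, define $A_m:=d/2+j-p_{V_m}(\iota)=\frac{1}{2}\langle\iota,f_m\rangle$, which is a positive integer because $d$ is even (this parity is exactly what case (A) provides), and let $T=\{m:A_m=1\}$ and $S=\{i:p_i(\iota)\geq 2\}$. The failure of \eqref{cases} is precisely $S\not\subseteq V_k$ for every $k$. Summing $p_{V_m}(\iota)=d+2j$ over $m$ yields the bound $(t-2)(d/2+j)\leq t$ when $|T|=t$, so $|T|\geq 3$ combined with $j\geq 1$ and $d\geq 4$ leaves only the Gorenstein borderline $K_{1,1,1,1}$ at $j=1$, where inspection of $3P^\circ\cap\ZZ^4$ shows $|T|\leq 2$ anyway. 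Moreover, every $m\in T$ satisfies $p_{V_m}(\iota)=d/2+j-1>r_m$ (using $r_n<d/2$ and $j\geq 1$), so $V_m\cap S\neq\emptyset$. Choosing $u\in V_a\cap S$ and $v\in V_b\cap S$ in distinct parts with $T\subseteq\{a,b\}$ (a small case distinction $|T|\in\{0,1,2\}$ using $S\not\subseteq V_k$), I then verify $\iota-\rho(\{u,v\})\in(\ell+j-1)P^\circ$: the new values are $A'_m=A_m\geq 1$ for $m\in\{a,b\}$ and $A'_m=A_m-1\geq 1$ for $m\notin\{a,b\}$, and coordinate positivity is preserved since $p_u,p_v\geq 2$.

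The main obstacle is the simultaneous control of the tight set $T$ and the placement of the reducing edge: both endpoints must lie in $S$, in distinct parts, and together cover every tight part so that the non-tight slack never drops below $1$. The key ingredients that make this possible are the integer parity of the $A_m$ (only available in case (A)), the bound $|T|\leq 2$ for $j\geq 1$, and the strict inequality $r_n<d/2$, which guarantees that tight parts always host a coordinate of multiplicity at least $2$.
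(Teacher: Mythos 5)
Your proof is correct, and its overall strategy coincides with the paper's: reduce first-appearance to the non-existence of a single edge $e$ with $\iota-\rho(e)$ interior, prove sufficiency from the fact that $V_k$ is an independent set so every edge kills a coordinate equal to $1$, and prove necessity by subtracting one well-chosen edge between two coordinates that are at least $2$ in distinct parts. The only place you genuinely diverge is in certifying that $\iota-\rho(\{u,v\})$ stays interior: the paper orders the parts by their excess $r_k'=p_{V_k}(\iota)-r_k$, subtracts the edge joining the two largest-excess parts, and runs a case analysis on which part attains the maximal $p_{V_k}(\iota)$ (the cases $k=k_1$, $k_3$, $n$), whereas you track the facet slacks $A_m=\tfrac{1}{2}\langle\iota,f_m\rangle$, use the evenness of $d$ from case (A) to make them integers $\ge 1$, bound the tight set $T$ to size at most $2$, and choose the edge so that its endpoints cover $T$. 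Your bookkeeping yields a more uniform verification (every non-tight slack drops by at most one and stays positive), at the cost of the small counting argument for $|T|\le 2$; the paper's version is shorter to set up but its case analysis is more delicate. Both arguments rest on the same facet description of $P_{K_{r_1,\ldots,r_n}}$ by $\Psi$, so there is no gap.
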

\begin{proof}
{\bf ``If'' part}: By the condition on $\iota$, we see that $\iota$ cannot be written 
as a sum of an element of $(\ell+j')P_{K_{r_1,\ldots,r_n}}^\circ \cap \ZZ^d$ with $j'<j$ and 
$\rho(e_1),\ldots,\rho(e_{j'})$ with $e_1,\ldots,e_{j'}\in E(K_{r_1,\ldots,r_n})$. Thus, we obtain the desired result. 

\smallskip

\noindent
{\bf ``Only if'' part}:  We prove the assertion by induction on $j\ge 0$. 
If $j=0$, then $\sum_{i\in [d]}\eb_i$ is the unique first appearing interior point in $\ell P_{K_{r_1,\ldots,r_n}}^\circ \cap \ZZ^d$. 

Let $j\ge 1$. Let $r_k':=p_{V_k}(\iota)-r_k$ for $k\in [n]$. By the hypothesis of induction, 
there are at least two $k$'s with $r_k'\ne 0$. Take these $r_{k_1}'\ge r_{k_2}'\ge \cdots \ge r_{k_s}'>0$ and $k_p>k_q$ if $r_{k_p}'=r_{k_q}'$. 
Remark $s\ge 2$. Then there are $v_{k_1}\in V_{k_1}$ and $v_{k_2}\in V_{k_2}$ such that $p_{v_{k_1}},p_{v_{k_2}}\ge 2$. 

If we can have $\iota'=\iota-\rho(\{v_{k_1},v_{k_2}\}) \in (\ell+j-1)P_{K_{r_1,\ldots,r_n}}^\circ \cap \ZZ^d$, 
then $\iota$ is not a first appearing interior point. 
Since $\langle \iota',r\rangle>0$ for $r\in \Psi_r$ holds, we may show that $\langle \iota',f_k\rangle>0$ with $k=\max\{p_{V_j}(\iota):j\in [n]\}$. 
We see that $k$ should be one of $k_1$, $k_3$ and $n$.

\begin{itemize}
\item[($k=k_1$)] We have $\langle \iota',f_{k_1}\rangle=\langle \iota,f_{k_1}\rangle>0$. 
\item[($k=k_3$)] We see that $p_{V_{k_1}}(\iota)=p_{V_{k_2}}(\iota)=p_{V_{k_3}}(\iota)$. Remark $p_{V_k}(\iota)=r_k+r_k'$. Then 
\begin{align*}
\langle \iota',f_{k_3}\rangle&=\biggl( \sum_{i\in [n]\setminus \{k_3\}}p_{V_i}\biggr)(\iota)-2 - p_{V_{k_3}}(\iota) \\
&=\biggl( \sum_{i\in [n]\setminus \{k_1,k_2,k_3\}}p_{V_i}\biggr)(\iota)+(r_{k_2}-1)+(r_{k_2}'-1)>0.
\end{align*}
\item[($k=n$)] If $r_n'\ge r_{k_2}'$, then we have $n=k_1$ or $k_2$, so we can deduce the case $k=k_1$ or $k_3$. 
Hence, we may assume that $r_n'<r_{k_2}'$. Then we see that 
\begin{align*}
\langle \iota',f_n\rangle&=\biggl( \sum_{i\in [n-1]}p_{V_i}\biggr)(\iota)-2 - p_{V_n}(\iota) \\
&\ge \biggl( \sum_{i\in [n-1]}r_i-r_n\biggr)+(r_{k_1}'-1)+(r_{k_2}'-r_n'-1)>0.
\end{align*}
\end{itemize}
Therefore, we obtain the desired result.
\end{proof}

\begin{lem}\label{lem:rephrase}
Let $r_1,\ldots,r_n$ be positive integers with $1 \leq r_1 \leq \cdots \leq r_n$, let $d=\sum_{i \in [n]}r_i$ be even. 
Assume that $n = 3$ with $r_1 \geq 2$ or $n \geq 4$. 
Then $n$ and $(r_1,\ldots,r_n)$ satisfy one of the conditions (iii)---(vi) in Theorem~\ref{thm:almGor} 
if and only if $r_i \in \{1,d/2-1\}$ holds for any $i \in [n]$. 
\end{lem}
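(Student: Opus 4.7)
The plan is to prove both directions by direct computation against the identity $d = \sum_{i\in[n]} r_i$. The lemma is essentially a bookkeeping statement, so no new machinery is needed; the argument reduces to a short case analysis.

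For the ``only if'' direction, I would unwind each of (iii)--(vi) and compute. Case (iii) has $d=6$, $d/2-1=2$, matching $(2,2,2)$. Case (iv) has $d=2+2m$, $d/2-1=m$, matching $(1,1,m,m)$. Case (v) has $d=(n-1)+(n-3)=2n-4$, $d/2-1=n-3$, matching $(1,\ldots,1,n-3)$. Case (vi) has $d=n$ even with all $r_i=1$, trivially contained in $\{1,d/2-1\}$.

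For the converse, assume $r_i \in \{1,d/2-1\}$ for every $i$. The standing hypothesis $n=3$ with $r_1\ge 2$, or $n \geq 4$, forces $d \ge 4$ and hence $d/2-1 \ge 1$. Set $k := |\{i : r_i = 1\}|$ and $m := n-k$; by monotonicity the tuple takes the shape $(\underbrace{1,\ldots,1}_{k}, \underbrace{d/2-1,\ldots,d/2-1}_{m})$. Substituting into $d = \sum_{i} r_i$ yields the clean relation
\[
(m-2)\,d \;=\; 2(n-2k),
\]
and the bulk of the proof is a case-split on $m$: $m=0$ gives $d=n$ even, which is case (vi) for $n \geq 6$ and the Gorenstein point $(1,1,1,1)$ for $n=4$ (absorbed by case (iv) with parameter $1$); $m=1$ gives $d=2n-4$ and the tuple $(1,\ldots,1,n-3)$, i.e.\ case (v); $m=2$ forces $n=4$ and produces $(1,1,m',m')$ with $m'=d/2-1$, i.e.\ case (iv); $m=3$ combined with $d \geq n$ forces $n=3$, $d=6$, giving $(2,2,2)$, i.e.\ case (iii). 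For $m \geq 4$, the estimate $d \geq m(d/2-1) \geq 2d-4$ gives $d \leq 4$, which together with $d \geq n \geq m \geq 4$ forces all $r_i = 1$, contradicting $m \geq 4$.

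I do not expect a genuine obstacle: the argument is purely algebraic. The one mildly delicate point is the degeneracy $d/2-1=1$ (i.e.\ $d=4$), where the two allowed values of $r_i$ coincide and the decomposition into ``ones'' and ``$(d/2-1)$-entries'' is not unique; but this only yields $(1,1,1,1)$, which is already absorbed by cases (iv) and (v) at $n=4$, so the case split handles it without modification.
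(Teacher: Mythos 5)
Your proposal is correct and follows essentially the same route as the paper: both decompose the tuple by counting how many entries equal $d/2-1$ (your $m$, the paper's $\alpha$) and run a short case analysis on that count against the identity $d=\sum_i r_i$, with the $d=4$ degeneracy handled the same way. The only cosmetic differences are that you package the identity as $(m-2)d=2(n-2k)$ and split on $m=0,1,2,3,\ge 4$ where the paper splits on $\alpha=0,1,\ge 2$, and that you write out the ``only if'' direction which the paper dismisses as easy.
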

\begin{proof}
Since ``only if'' part is easy to see, we prove ``if'' part. 

Assume that $r_i \in \{1,d/2-1\}$ holds for any $i \in [n]$. Note that $d \geq n$ by definition. 
Let $\alpha$ be the number of $r_i$'s with $r_i=d/2-1$. Then $r_1=\cdots=r_{n-\alpha}=1$. 
Thus, we have $d=(n-\alpha)+(d/2-1)\alpha$. 

The case $\alpha=0$ is nothing but the case (vi). Note that $n$ should be even by $d=n$. 
If $\alpha=1$, then $d=(n-1)+(d/2-1)$ holds by definition, which implies that $d=2n-4$. Thus, this corresponds to the case (v). 

Suppose $\alpha \geq 2$. When $n \geq 5$, we see that $d = (d/2-2)\alpha + n \geq d-4 + n > d$, 
a contradiction. Hence, $n =3$ or $n=4$. 
\begin{itemize}
\item Let $n=3$. Since we assume $r_1 \geq 2$, we may discuss the case $\alpha=3$. Then $d=3(d/2-1)$ holds, i.e., $d=6$. 
Hence, we obtain that $r_1=r_2=r_3=2$, which is the case (iii). 
\item Let $n=4$. If $\alpha=2$, then we see that $r_3(=r_4)$ can be arbitrary, which is the case (iv). 
If $\alpha=3$ (resp. $\alpha=4$), then $d=1+3(d/2-1)$ (resp. $d=4(d/2-1)$) holds, i.e., $d=4$. 
Hence, we obtain that $r_1=\cdots=r_4=1$, which is included in (iv). 
\end{itemize}
\end{proof}

Now, we are ready to give a proof of Theorem~\ref{thm:almGor}. Since the case where $n=2$ or $n=3$ with $r_1 =1$ has been already done in Section~\ref{sec:poset}, 
we assume that $n=3$ with $r_1 \geq 2$ or $n \geq 4$. Under this assumption, 
it suffices to prove that $\kk[K_{r_1,\ldots,r_n}]$ is almost Gorenstein if and only if $d$ is even and $r_i \in \{1, d/2-1\}$ for any $i\in [n]$ by Lemma~\ref{lem:rephrase}. 
Then we may assume the case (A) by Corollary~\ref{cor}. Remark that $1\le r_k \le d/2-1$ holds. 
By Lemmas~\ref{multiplicity} and \ref{m.s.o.g}, we have 
$e(C)=\displaystyle \sum_{k\in [n]}e_k(C)\ \text{and}\ \mu(C)=\displaystyle \sum_{k\in [n]}\mu_k(C)$, where \begin{align*}
e_k(C):=\Bigl(\frac{d}{2}-r_k-1\Bigr)\binom{d-2}{r_k-1}\;\text{ and }\;\mu_k(C)=\sum_{j\in [\frac{d}{2}-r_k-1]}\binom{r_k-1+2j}{r_k-1} \;\text{ for each }k\in [n]. 
\end{align*}
\begin{itemize}
\item If $r_k=1$, then we have $e_k(C)=\mu_k(C)=d/2-2$. 
\item If $r_k=d/2-1$, we have $e_k(C)=\mu_k(C)=0$. 
\item If $1<r_k<d/2-1$, then we have 
$$\mu_k(C)\le \sum_{j\in [\frac{d}{2}-r_k-1]}\binom{d-r_k-3}{r_k-1}=\left(\frac{d}{2}-r_k-1\right)\binom{d-r_k-3}{r_k-1}<e_k(C).$$
\end{itemize}
Hence, if there is $k\in [n]$ with $1<r_k<d/2-1$, then $e(C)>\mu(C)$ holds. 
Therefore, we conclude that $e(C)=\mu(C)$ holds if and only if $r_k=1$ or $d/2-1$ for any $i \in [n]$.

\bigskip

\end{document}